\tikzset{
  commutative diagrams/.cd, 
  arrow style=tikz, 
  diagrams={>=stealth}
}
\theoremstyle{definition}
\def\@tocline#1#2#3#4#5#6#7{\relax
  \ifnum #1>\c@tocdepth 
  \else
    \par \addpenalty\@secpenalty\addvspace{#2}%
    \begingroup \hyphenpenalty\@M
    \@ifempty{#4}{%
      \@tempdima\csname r@tocindent\number#1\endcsname\relax
    }{%
      \@tempdima#4\relax
    }%
    \parindent\z@ \leftskip#3\relax \advance\leftskip\@tempdima\relax
    \rightskip\@pnumwidth plus4em \parfillskip-\@pnumwidth
    #5\leavevmode\hskip-\@tempdima
      \ifcase #1
       \or\or \hskip 1em \or \hskip 2em \else \hskip 3em \fi%
      #6\nobreak\relax
    \dotfill\hbox to\@pnumwidth{\@tocpagenum{#7}}\par
    \nobreak
    \endgroup
  \fi}
\newcounter{marginnote}
\DeclareMathAlphabet{\mathpzc}{OT1}{pzc}{m}{it}
\theoremstyle{definition}
\newtheorem{theorem}{Theorem}[section]
\newtheorem{corollary}[theorem]{Corollary}
\newtheorem{lemma}[theorem]{Lemma}
\newtheorem{proposition}[theorem]{Proposition}
\newtheorem{remark}[theorem]{Remark}
\newtheorem*{runningexample*}{Running example}
\newtheorem*{aside*}{Aside}
\newtheorem{definition}[theorem]{Definition}
\newtheorem{example}[theorem]{Example}
\newtheorem{proposition-definition}[theorem]{Proposition-Definition}
\DeclareMathOperator{\Hom}{Hom}
\DeclareMathOperator{\Pic}{Pic}
\newcommand{\bcd}{\begin{center}\begin{tikzcd}}
\newcommand{\ecd}{\end{tikzcd}\end{center}}
\newcommand{\cC}{\mathcal{C}}
\newcommand{\cE}{\mathcal{E}}
\newcommand{\cF}{\mathcal{F}}
\newcommand{\cL}{\mathcal{L}}
\newcommand{\cO}{\mathcal{O}}
\newcommand{\cQ}{\mathcal{Q}}
\newcommand{\cM}{\mathcal{M}}
\newcommand{\cX}{\mathcal{X}}
\newcommand{\cT}{\mathcal{T}}
\newcommand{\cY}{\mathcal{Y}}
\newcommand{\qlogo}{\cQ_{0, \alpha}^{\mathrm{log}}(X|D,\beta)}
\newcommand{\qlog}{\cQ_{g, \alpha}^{\mathrm{log}}(X|D,\beta)}
\newcommand{\largmr}{\frM_{g,\alpha}^{\mathrm{log}}(\argmr)}
\newcommand{\qrel}{\mathcal{Q}^{\mathrm{rel}}_{0,\alpha}(X|D,\beta)}
\newcommand{\CC}{\mathbb{C}}
\newcommand{\LL}{\mathbb{L}}
\newcommand{\NN}{\mathbb{N}}
\newcommand{\PP}{\mathbb{P}}
\newcommand{\QQ}{\mathbb{Q}}
\newcommand{\RR}{\mathbb{R}}
\newcommand{\TT}{\mathbb{T}}
\newcommand{\ZZ}{\mathbb{Z}}
\newcommand{\AAA}{\mathbb{A}}
\newcommand{\GG}{\mathbb{G}}
\newcommand{\agm}{[\AAA^1/\GG_{m}]}
\newcommand{\argmr}{[\AAA^r/\GG_{m}^r]}
\newcommand{\vir}{\text{\rm vir}}
\newcommand{\frM}{\mathfrak{M}}
\newcommand{\Spec}{\operatorname{Spec}}
\newcommand{\GIT}{/\!\!/}
\NewDocumentCommand{\compatibilitydatum}{m m m m m m O{} O{} O{}}{
\begin{equation*} \begin{tikzcd}[ampersand replacement=\&]
  \: \arrow{r} \& {#1} \arrow{r} \arrow{d}{#7} \& {#2} \arrow{r} \arrow{d}{#8} \& {#3} \arrow{r}{[1]} \arrow{d}{#9} \& \: \\
  \: \arrow{r} \& {#4} \arrow{r} \& {#5} \arrow{r} \& {#6} \arrow{r} \& \:
\end{tikzcd} \end{equation*}}
\NewDocumentCommand{\commutingsquare}{m m m m o O{} O{} O{} O{}}{
\begin{equation}\begin{tikzcd}[ampersand replacement=\&] \label{#5}
  #1 \arrow{r}{#6} \arrow{d}{#7} \& #2 \arrow{d}{#8} \\
  #3 \arrow{r}{#9} \& #4
\end{tikzcd}\IfValueTF{#5}{\label{#5}}{} \end{equation}}
\NewDocumentCommand{\cartesiansquare}{m m m m O{} O{} O{} O{}}{
\begin{equation*}\begin{tikzcd}[ampersand replacement=\&]
  #1 \arrow{r}{#5} \arrow{d}{#6} \arrow[dr, phantom, "\square"] \& #2 \arrow{d}{#7} \\
  #3 \arrow{r}{#8} \& #4
\end{tikzcd} \end{equation*}}
\NewDocumentCommand{\cartesiansquarelabel}{m m m m m O{} O{} O{} O{}}{
\begin{tikzcd}[ampersand replacement=\&]
  #1 \arrow{r}{#6} \arrow{d}{#7} \arrow[dr, phantom, "\square"] \& #2 \arrow{d}{#8} \\
  #3 \arrow{r}{#9} \& #4
\end{tikzcd}\IfValueTF{#5}{\label{#5}}{}
}
\NewDocumentCommand{\triangleofspaces}{m m m O{} O{} O{}}{
\begin{tikzcd} [ampersand replacement=\&]
#1 \arrow{r}{#4} \arrow[bend right]{rr}{#5} \& #2 \arrow{r}{#6} \& #3
\end{tikzcd}}
\begin{document}
 
\title{Logarithmic Quasimaps}

\author[Shafi]{Qaasim Shafi}
\address{School of Mathematics\\
	Watson Building \\
	University of Birmingham\\
        Edgbaston
	B15 2TT\\
	UK}
\email{m.q.shafi@bham.ac.uk}

\begin{abstract}
	We construct a proper moduli space which is a Deligne--Mumford stack parametrising quasimaps relative to a simple normal crossings divisor in any genus using logarithmic geometry. We show this moduli space admits a virtual fundamental class of the expected dimension leading to numerical invariants which agree with the theory of Battistella--Nabijou where the latter is defined.
\end{abstract}

\maketitle
\vspace{-3em}
\tableofcontents

\vspace{-3em}

\section*{Introduction}

\subsection{Results}

Let $X$ be a GIT quotient $W \GIT G$ of an affine variety $W$ by a reductive algebraic group $G$, satisfying the assumptions of \cite{ciocan2014stable}. Examples include toric varieties, partial flag varieties and complete intersections in these spaces. In addition, suppose $X$ is a subvariety of $V \GIT G$, where $V$ is a vector space, and let $D = D_1 + \dots + D_r$ be a simple normal crossings divisor on $X$ pulled back from $V \GIT G$. For many targets, for example toric varieties and  partial flag varieties, $V=W$ and hence this covers all simple normal crossings divisors. We build the theory of logarithmic quasimaps for $(X,D)$.

\begin{theorem}
	Fix non-negative integers $g,n$, an effective degree $\beta$ and $\alpha = (\alpha_{i,j})_{i,j}$ a matrix of non-negative integers with $\sum_{j=1}^n \alpha_{i,j} = D_{i} \cdot \beta$. The moduli space $\qlog$ parametrising logarithmic quasimaps to $(X,D)$ of genus $g$, degree $\beta$ with contact orders $\alpha$ is a proper Deligne--Mumford stack.
\end{theorem}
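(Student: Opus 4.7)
The plan is to express $\qlog$ as a moduli space of pairs: an ordinary CFKM prestable quasimap $(C, p_1,\ldots, p_n, f\colon C \to X)$ of genus $g$, degree $\beta$ satisfying the quasimap stability condition of \cite{ciocan2014stable}, together with a minimal logarithmic enhancement of $C$ making the composition $C \to V \GIT G$ into a logarithmic morphism to $(V\GIT G, D)$ with prescribed contact orders $\alpha_{i,j}$. Since $D$ is pulled back from a toric divisor on the linear quotient, this is equivalent to requiring a log morphism $C \to \mathcal{A}$ to the Artin fan of $(V\GIT G, D)$ lifting the underlying map on coarse spaces, which is the standard Abramovich--Chen--Gross--Siebert log map data applied to the pair. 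I would build $\qlog$ as the fiber product of the CFKM quasimap stack $\Mmap{g,n}{X,\beta}$ with the stack of prestable log curves equipped with a log morphism to $\mathcal{A}$ of contact type $\alpha$, over the stack of prestable curves.

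For the Deligne--Mumford property and finite type, I would analyze the forgetful morphism $\qlog \to \Mmap{g,n}{X,\beta}$. The target is DM and of finite type by \cite{ciocan2014stable}, and the fibers of the forgetful map are discrete: for a fixed underlying quasimap the divisorial orders of vanishing of $f^*D_i$ at every point of $C$ are already determined, the contact orders $\alpha_{i,j}$ pin down the tropical combinatorial type, and among all log enhancements realizing these data the minimal one is unique by the general theory of minimal log structures. Boundedness of the combinatorial types consistent with $(\beta, \alpha)$ is controlled by $\sum_j \alpha_{i,j} = D_i \cdot \beta$, so only finitely many tropical types occur, which gives finite type, while the absence of infinitesimal log automorphisms on minimal objects upgrades representability in algebraic stacks to DM.

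Properness is the main obstacle and I would establish it by the valuative criterion. Separatedness reduces to separatedness of CFKM quasimaps, giving a unique extension of the underlying family across the closed point of a DVR, combined with the uniqueness of minimal log enhancements. For completeness, given a family of log quasimaps over $\Spec K$ with $K$ the fraction field of a DVR $R$, properness of CFKM extends the underlying quasimap to $\mathcal{C} \to X$ over $\Spec R$, but on the special fiber the limit may acquire a basepoint that lands on $D$ or may have whole components mapped into $D$, and the naive log enhancement can fail to exist or fail to record the prescribed contact orders $\alpha_{i,j}$. The remedy, following the strategy used for log stable maps, is to pass to a ramified base change and use the tropicalization of the generic log quasimap to predict the dual graph of the limit and the weights along its edges; one then constructs the minimal log structure on the extended curve and checks that the CFKM limit is compatible with the resulting log map to $\mathcal{A}$. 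The key simplification is that $D$ is pulled back from a toric divisor, so the target log structure is combinatorially rigid and the CFKM extension already knows the orders of vanishing of $f^*D_i$ at every point of the limit curve. The hard technical point, where I expect most of the work to lie, is controlling the interaction between quasimap basepoints and the divisor, since basepoints lying on $D$ are the genuinely new phenomenon not present in the log stable maps theory, and one must verify that the contact orders at the remaining marked points are preserved after this extension procedure.
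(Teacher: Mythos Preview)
Your overall strategy---realise $\qlog$ as a fibre product of the absolute quasimap stack with a stack of log maps to the Artin fan---is exactly the paper's. Two points deserve comment.

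First, the fibre product is stated over the wrong base. Taking the product over $\frM_{g,n}$ yields pairs (quasimap, log map $C\to\mathcal{A}$) that share only the source curve, with no compatibility between the log map and the morphism $C\to\argmr$ induced by the quasimap. The paper fibres over $\frM_{g,n}(\argmr)$, which enforces precisely the lifting condition you describe in prose. This is probably a slip, but as written your fibre product is not the correct moduli space.

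Second, and more importantly, your properness argument is far more laborious than necessary and the difficulty you single out is illusory. The paper's proof is one line: since $\cQ_{g,n}(X,\beta)$ is proper by \cite{ciocan2014stable}, it suffices that the forgetful map $\pi_2\colon\largmr\to\frM_{g,n}(\argmr)$ be proper, and this is already a theorem of Gross--Siebert. Properness of $\qlog$ is then immediate by base change. Your concern about ``basepoints lying on $D$'' as a genuinely new phenomenon is a red herring: the induced morphism $C\to\argmr$ is an honest morphism of stacks regardless of where the quasimap basepoints sit, so the problem of logarithmically enhancing it is \emph{identical} to the one already solved in the log stable maps literature. The whole point of the fibre-product decomposition is that the log side never sees the basepoints; no new valuative-criterion work is required.
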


Given a quasimap to $X$ from a curve $C$, each $D_i$ induces a line bundle-section pair on $C$, thought of as the pullback of the pair cutting out $D_i$. The moduli space parametrises quasimaps to $X$ together with a logarithmic enhancement (with contact order $\alpha$) of the morphism $C \rightarrow [\AAA^r/\GG_m^r]$ induced by these line bundle-section pairs. This compactifies the space of maps from smooth curves to $X$ with contact order $\alpha$ along $D$. As is standard, the moduli space is not smooth admitting a fundamental class, but we show it admits a virtual fundamental class.

\begin{theorem}
	The moduli space $\qlog$ admits a perfect obstruction theory over $\frM_{g,\alpha}^{\log}([\AAA^r/\GG_m^r])$ leading to a virtual fundamental class $[\qlog]^{\vir}$.
\end{theorem}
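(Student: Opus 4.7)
The plan is to construct the relative perfect obstruction theory by pushing forward a relative tangent complex on the target, adapting the Ciocan-Fontanine--Kim--Maulik construction for quasimaps to the logarithmic setting. Let $\pi \colon \cC \to \qlog$ denote the universal log curve and $u \colon \cC \to [W/G]$ the universal quasimap. Write $\phi \colon [W/G] \to \argmr$ for the morphism induced by the $G$-equivariant $r$-tuple of sections on $V$ cutting out $D$ (pulled back via $W \hookrightarrow V$, together with the characters of $G$ corresponding to the line bundles of the $D_i$). By construction the composite $\phi \circ u$ is the universal log map classified by the forgetful morphism $\qlog \to \largmr$.

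The proposed obstruction theory is
\[
E^\bullet \;:=\; \bigl(\Rder\pi_* u^* T_\phi\bigr)^\vee,
\]
equipped with its natural map $E^\bullet \to \LL_{\qlog/\largmr}$ coming from the Illusie cotangent complex of the universal section $u$ relative to the fixed composite $\phi \circ u$. Since $W$ is smooth and both $[W/G]$ and $\argmr$ are smooth Artin stacks, $T_\phi$ is a two-term complex of locally free sheaves in degrees $[0,1]$; combined with properness of $\pi$ and finiteness of the basepoint locus of $u$, this shows that $\Rder\pi_* u^* T_\phi$ is perfect of amplitude $[-1,0]$.

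To verify the POE conditions, I would compare the deformations and obstructions of a log quasimap over a fixed point of $\largmr$ with the hypercohomology groups of $u^* T_\phi$. Fixing a point of the base rigidifies the log curve together with the log map $\phi \circ u$, i.e., on the underlying level one fixes $\cC$ and the $r$ line bundle--section pairs corresponding to the $D_i$. The residual deformation problem for $u$ relative to $\phi$ is then classical and governed by $u^* T_\phi$; no extra log deformations or obstructions appear because both log structures are pulled back from $\phi \circ u$, which is being held fixed. This identification lets one upgrade the standard cotangent-complex comparison to a POE in the log setting, via Olsson's theory of log cotangent complexes.

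The main technical obstacle I anticipate is controlling $T_\phi$ and the POE map at basepoints and at nodes carrying nontrivial contact orders. At basepoints, $u$ leaves the semistable locus but still lands in the smooth stack $[W/G]$, so $T_\phi$ remains locally free there, as in the usual CFKM argument --- this is precisely why one must work with $[W/G]$ rather than $X$ itself. At nodes the relative formulation over $\largmr$ is essential: the discrete log data (tropical type and contact orders) is absorbed into the base, so $E^\bullet$ only controls the continuous deformations of $u$, and the logarithmic subtleties of smoothing nodes do not interfere with perfectness. Once the map $E^\bullet \to \LL_{\qlog/\largmr}$ is shown to satisfy the Behrend--Fantechi POE conditions, $[\qlog]^{\vir}$ is produced by the standard construction.
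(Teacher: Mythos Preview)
Your approach is the same as the paper's: the paper also takes $\TT^{\log}_{[W/G]}$, identifies it with the relative tangent complex of $[W/G]\to\argmr$ (your $T_\phi$), and sets $E^\bullet=(\Rder\pi_*\mathbf{L}u^*\TT^{\log}_{[W/G]})^\vee$, then verifies the Behrend--Fantechi conditions using Olsson's log cotangent complex.

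There is, however, a genuine gap in your amplitude argument. You assert that $W$ is smooth, but the standing hypotheses only require $W$ to have l.c.i.\ singularities, with $W^s$ nonsingular; so $[W/G]$ is not a smooth Artin stack in general and your one-line deduction that $T_\phi$ is a two-term complex of locally frees in $[0,1]$ does not go through. The paper repairs this by using the prequotient embedding $W\hookrightarrow V$ with $V$ a vector space: first one shows $\TT^{\log}_{[V/G]}$ has cohomology in $[0,1]$ (here $V$ really is smooth), and then the triangle for $[W/G]\to[V/G]\to\argmr$ together with $\TT_{[W/G]/[V/G]}$ being supported in degree $1$ (the l.c.i.\ hypothesis) gives the same bound for $\TT^{\log}_{[W/G]}$.

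A secondary point: the amplitude you want for $\Rder\pi_* u^*T_\phi$ is $[0,1]$, not $[-1,0]$ (the dual $E^\bullet$ is what sits in $[-1,0]$). Since $T_\phi$ lives in $[0,1]$ and $\pi$ has fibre dimension $1$, one must rule out a contribution in degree $2$. Your remark about finiteness of the basepoint locus is exactly the right ingredient, but it needs to be made precise: away from the finite set $B$ the map $u$ factors through $X$, so $H^1(u^*T_\phi)$ restricted to a fibre is a torsion sheaf, and a spectral-sequence argument then kills $\mathbf{R}^2\pi_*$. The paper carries this out explicitly.
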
 

In \cite{battistella2021relative}, the authors build a theory of relative quasimaps for a smooth projective toric variety relative to a smooth, very ample divisor in genus zero. 

\begin{theorem}
	For $X$ a smooth projective toric variety, $g=0$ and $D$ smooth and very ample, this theory coincides with the theory of Battistella--Nabijou. 
\end{theorem}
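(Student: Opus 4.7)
The plan is to construct a morphism $\Phi : \qlog \to \qrel$ of proper Deligne--Mumford stacks (specialising to $g=0$, $r=1$), show it is an isomorphism, and check it identifies virtual fundamental classes. Both sides parametrise the same underlying data -- a quasimap $f : C \to X$ from a genus zero prestable curve with contact orders $\alpha$ along the smooth divisor $D$ -- and differ only in how the boundary degeneration is recorded: $\qlog$ carries a minimal log enhancement of the induced morphism $C \to \agm$, whereas $\qrel$ carries an expansion of $(X,D)$ together with a quasimap to it.

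\textbf{Construction.} In the smooth-divisor, genus zero case the minimal log enhancement of $C \to \agm$ is combinatorially encoded by a tropical curve $\Gamma \to \RR_{\geq 0}$ whose leg slopes are the contact orders $\alpha_j$. The same data canonically determines a layered expansion of $(X,D)$: each bounded edge of $\Gamma$ produces a $\PP^1$-bundle component inserted between the strict transform of $X$ and $D$, and the image of the curve in this expansion is a quasimap with the prescribed contact orders. This produces the morphism $\Phi$ on objects, and becomes functorial in families by working with the universal log and tropical data over $\frM_{0,\alpha}^{\log}(\agm)$.

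\textbf{Isomorphism and virtual classes.} Since both sides are proper Deligne--Mumford stacks and $\Phi$ preserves the underlying quasimap, it suffices to show $\Phi$ induces a bijection on the combinatorial data governing the boundary: tropical curves for $\qlog$ must match admissible expansions for $\qrel$. This is a combinatorial check, parallel to the established log--expanded comparison for stable maps. For the virtual classes, both sides carry perfect obstruction theories relative to a base stack of curves-with-structure (the log base $\frM_{0,\alpha}^{\log}(\agm)$ versus the Battistella--Nabijou base of expansions). Under the identification of these bases, the relative obstruction theories become identical -- each is governed by the complex controlling deformations of a quasimap to $V \GIT G$ satisfying the stability condition -- and the virtual classes agree.

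\textbf{Main obstacle.} The hardest step is the construction itself. The log/expanded dictionary is classical for stable maps, but one must verify it respects \emph{quasimap} stability: quasimaps admit isolated base points and impose different constraints on rational tails than stable maps, so the permissible contractions on each side must be carefully matched. The very ampleness hypothesis on $D$ of Battistella--Nabijou should enter here, controlling which expansions arise and guaranteeing that contracted components on our side correspond to destabilising expansions on theirs. Once this combinatorial dictionary is rigorously established the remainder of the argument is formal.
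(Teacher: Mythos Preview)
Your proposal rests on a mistaken premise: the Battistella--Nabijou space $\qrel$ is \emph{not} built from expansions of the target. Following Gathmann's original construction for stable maps, $\qrel$ is defined as the closed substack of the absolute quasimap space $\cQ_{0,n}(X,\beta)$ cut out by certain numerical conditions on the order of vanishing of the induced section $u_D$ along components of the source curve. There are no expanded targets, no $\PP^1$-bundle components, and no combinatorial layer data on that side. Consequently your entire log/expanded dictionary is aimed at the wrong object, and the ``main obstacle'' you identify (matching quasimap stability across expansions) never arises.

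The paper's argument is quite different. One first observes that forgetting the log enhancement defines a morphism $g:\qlogo \to \cQ_{0,n}(X,\beta)$, and a balancing/degree computation shows its image lands in $\qrel$. One does \emph{not} prove $g$ is an isomorphism; the statement ``theories coincide'' means $g_*[\qlogo]^{\vir} = [\qrel]^{\vir}$. This is established first for $(\PP^N,H)$: the logarithmic Euler sequence shows $\cQ_{0,\alpha}^{\log}(\PP^N|H,d)$ is unobstructed and irreducible of the expected dimension, and since $g$ is an isomorphism over the dense open locus of irreducible source curves, the fundamental classes push forward correctly. The general case is then deduced by using the very ample embedding $j:X\hookrightarrow \PP^N$ (this is where very ampleness enters): one checks the relevant squares are cartesian and that the obstruction theories for $i$ and $j'$ are compatible, so virtual pullback and Manolache's functoriality give the result. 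Your proposal does not touch any of these ingredients.
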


\subsection{Why Quasimaps?}

Relative or logarithmic Gromov--Witten theory has had a tremendous influence on enumerative geometry in recent years. It has proved important for modern constructions in mirror symmetry \cite{gross2022canonical}, for determining ordinary Gromov--Witten invariants via the degeneration formula \cite{li2002degeneration, ranganathan2019logarithmic, chen2014degeneration, abramovich2016orbifold, kim2021degeneration, abramovich2020decomposition}, and for providing insights about the moduli space of curves \cite{graber2005relative}.  Quasimap theory provides an alternative curve counting framework \cite{marian2011moduli,ciocan2010moduli, ciocan2014stable} when the target admits a certain GIT presentation, by incorporating basepoints. The resulting \emph{quasimap invariants}, have also proved important in the context of mirror symmetry, as well as for studying the moduli space of curves. One example is their use in determining relations in the $\kappa$ ring \cite{pandharipande2012kappa}. Most crucially though, there are wall-crossing formulas relating Gromov--Witten invariants and quasimap invariants \cite{ciocan2014wall, ciocan2020quasimap, zhou2022quasimap}. We expect that a theory of logarithmic quasimaps will be able to produce new insights in logarithmic Gromov--Witten theory and its neighbouring areas.

\subsection*{Logarithmic Wall-Crossing} Computations in logarithmic Gromov--Witten theory are well sought after. The case of a smooth pair is well understood, yet explicit computations in the simple normal crossings case have proved to be more elusive. The few tools available use tropical geometry \cite{nishinou2006toric, mandel2020descendant, ranganathan2017skeletons}, scattering diagrams \cite{gross2010tropical, bousseau2020quantum, graefnitz2022tropical} and, more recently, rank reduction \cite{nabijou2022gromov}.
For ordinary (non-logarithmic) Gromov--Witten theory, almost all results that allow us to compute genus-zero Gromov--Witten invariants rely at heart on the wall-crossing formula \cite{ciocan2014wall,ciocan2020quasimap, zhou2022quasimap}, the comparison between quasimap invariants and Gromov--Witten invariants.  Battistella and Nabijou have proposed a similar program of computing logarithmic Gromov--Witten invariants by proving a wall-crossing formula relating logarithmic quasimaps and logarithmic Gromov--Witten invariants \cite{battistella2021relative}. Furthermore, they found as evidence for their proposal that a certain generating function of genus zero quasimap invariants relative to a smooth divisor coincided with the relative $I$-function of \cite{fan2019mirror}, which in turn can be obtained from a generating function of relative Gromov--Witten invariants by a change of variables. A full theory of logarithmic quasimaps now allows this avenue to be pursued.

\subsection*{Mirror Symmetry} Quasimaps have a fundamental connection to mirror symmetry. The wall-crossing formula is exactly the mirror map for Calabi-Yau threefolds \cite{ciocan2014wall}, and so the quasimap invariants coincide on the nose with B-model invariants of the mirror. The (conjectural) holomorphic anomaly equation provides remarkable structure to the Gromov--Witten theory of a Calabi-Yau, coming from the B-model. The link between the quasimap and B-model invariants has been utilised to give a direct geometric proof of the holomorphic anomaly equation for local $\PP^2$ \cite{lho2018stable}. In another direction, the authors of \cite{bousseau2021holomorphic} prove a holomorphic anomaly equation for the logarithmic Gromov--Witten theory of $\PP^2$ relative an elliptic curve. This provides interesting directions for holomorphic anomaly equations for logarithmic quasimaps.

\subsection*{Logarithmic-Orbifold Comparison} There is another approach to counting curves with tangency conditions, using orbifolds \cite{cadman2007using, tseng2020gromovwitten}. Here, one takes a pair $(X,D_1 + \dots + D_r)$ and replaces the divisor components with roots of the divisor, introducing non-trivial isotropy groups. In this orbifold compactification the tangency gets recorded in the group homomorphism between isotropy groups, using the technology of orbifold stable maps. If the divisor is smooth, then these two approaches give the same invariants in genus zero \cite{abramovich2017relative}. Roughly speaking, this equivalence can be seen by taking the coarse moduli map of an orbifold stable map, and noting that there is a logarithmic lift. 

This is no longer true in the simple normal crossings case \cite{nabijou2022gromov, battistella2022local}. The orbifold theory is more computable since it satisfies a product formula, but gives the wrong answers from the logarithmic perspective. In \cite{battistella2022gromov}, noting that the logarithmic theory is invariant under logarithmic modifications \cite{abramovich2018birational} but that the orbifold theory is not, the authors show that after a certain blow-up the two theories coincide. A related approach is suggested in \cite{you2022relative}, where the indication is that structures in orbifold Gromov--Witten theory will be preserved under modification. These approaches can be summarised as fixing the moduli problem (Gromov--Witten theory) and altering the target. A complimentary strategy is to fix the target and alter the moduli problem to a situation where the orbifold and logarithmic theories coincide. In ~\cite[Remark 5.4]{nabijou2022gromov}, the authors observe in examples, that the error terms occur in the presence of components of the moduli space consisting of stable maps with rational tails. Quasimap theory is designed to remove rational tails. If we consider quasimap theory as our moduli problem instead it is likely that there will be significantly fewer correction terms between the logarithmic and orbifold theories, the latter of which was developed in \cite{cheong2015orbifold}.

At least in genus zero, the proposed picture is
\begin{equation}\label{diagram}
	\begin{tikzcd}
	{\fbox{\txt{Logarithmic Gromov--Witten}}} \arrow[d, leftrightarrow, dotted] \arrow[r, "\txt{\cite{battistella2022gromov, abramovich2017relative}}",leftrightarrow,dashed] & {\fbox{\txt{Orbifold Gromov--Witten}}} \arrow[d,leftrightarrow, "\txt{\cite{zhou2022quasimap, cheong2015orbifold}}"] \\
	\fbox{{\txt{Logarithmic Quasimap}}} \arrow[r,leftrightarrow, dotted]                 & {\fbox{\txt{Orbifold Quasimap}}}              
	\end{tikzcd}
\end{equation}
This paper constructs the bottom left-hand corner of the diagram. The expected wall-crossing formula relating logarithmic Gromov--Witten theory to logarithmic quasimap theory would be the left-hand vertical arrow; the bottom arrow would be the comparison between logarithmic and orbifold quasimap theory. We expect this last comparison to be simpler, and in certain situations, trivial, which we will show a particular instance of, see Example \ref{p22linesQvsGW}.

 A parallel direction is the local-logarithmic correspondence \cite{vangarrel2019local}, which exhibits some of the beautiful geometry in logarithmic Gromov--Witten theory of a smooth pair $(X,D)$. There are counterexamples to a natural generalisation when $D$ is a simple normal crossings divisor, which are the same counterexamples used in the logarithmic-orbifold comparison \cite{nabijou2022gromov}, since the local and orbifold theories coincide \cite{battistella2022local}. 

 Calculations in \cite{shafi2022enumerative} indicate that these are no longer counterexamples in the quasimap setting. Therefore in an analogue comparison to the bottom arrow of Diagram \eqref{diagram}, it is likely that the local-logarithmic correspondence will hold in greater generality in the quasimap setting. 

\subsection{Outline}

We begin in Section \ref{2} by recalling the definition of quasimaps and presenting a way to incorporate divisors using maps to $[\AAA^r/\GG_m^r]$. 

In Section \ref{modq} we build the moduli space of logarithmic quasimaps and show it is a proper, Deligne--Mumford stack. The difficulty in the definition is that a priori it was not clear how the logarithmic structure should interact with the basepoints. The key is to recognise that in the Gromov--Witten setting a (stable) logarithmic map can be decomposed into the underlying stable map together with a logarithmic morphism to the Artin fan (or $[\AAA^r/\GG_m^r]$). We use this decomposition as an analogy for defining a logarithmic quasimap. Moreover, this approach allows for a streamlined presentation of the theory, which relies on the existence and properties of the space of logarithmic maps to $[\AAA^r/\GG_m^r]$. 

In Section \ref{4} we present three examples of the moduli space for $\PP^N$ relative to a collection of hyperplanes and compare them to the Gromov--Witten setting. For a single hyperplane in genus zero, we note that both spaces are irreducible but that the boundary compactification is significantly simpler in the quasimap case. In the case of multiple hyperplanes in genus zero, we give a specific instance where wall-crossing accounts for the entire discrepancy between orbifold Gromov--Witten and logarithmic Gromov--Witten theory. The final example exhibits the fact that with respect to the full toric boundary in any genus, the logarithmic Gromov--Witten and quasimap moduli spaces coincide. 

In Section \ref{5} we construct the virtual fundamental class, first in generality and then noting how the construction simplifies when working with a smooth projective toric variety relative to a toric divisor. 

Finally in Section \ref{BN}, we prove that this theory of logarithmic quasimaps coincides with the Battistella--Nabijou theory of relative quasimaps from \cite{battistella2021relative}, where the latter is defined. We do this by proving the result for $\PP^N$ relative to a hyperplane and then using the very ample embedding to pull the result back.

\subsection{History} Over the years there has been much interest in defining and computing relative Gromov--Witten invariants of a pair $(X,D)$. When attempting to define a proper moduli space of curves with fixed contact orders to $D$ one sees that in the limit whole components of the curve can fall into $D$, at which point it is no longer clear what contact order means. This poses a major difficulty. A first solution was proposed by Gathmann \cite{gathmann2002absolute} building on work of Vakil \cite{vakil2000enumerative} for (very) ample smooth divisors in genus zero by taking a closure inside the space of absolute stable maps. Jun Li \cite{li2001stable} defined a theory of relative stable maps for any smooth divisor in all genus using expansions of the target. Since then, there have been various approaches to extend the theory to the simple normal crossings case. The first of these came from Abramovich, Chen, Gross and Siebert \cite{gross2013logarithmic,chen2014stable,abramovich2014stable} using logarithmic structures which provide extra structure for defining contact order even if a component falls into $D$. There have also been approaches which combine expansions with logarithmic structures \cite{kim2010logarithmic,ranganathan2019logarithmic}. Finally, there is the approach using orbifolds \cite{cadman2007using, tseng2020gromovwitten}, which can give genuinely different invariants. In a different direction there have been extensions of these approaches \cite{abramovich2021punctured, fan2020structures, fan2021higher} which allow for negative contact orders with the divisor.  There has also been a parallel story from the symplectic perspective of Gromov--Witten theory, which is more closely related to the ideas of Jun Li, see \cite{ionel2003relative, li2001symplectic}.

In the quasimap setting, building on Gathmann's approach, Battistella and Nabijou \cite{battistella2021relative} developed a theory of relative quasimaps in genus zero for smooth projective toric varieties relative a smooth (very) ample divisor by taking a closure inside the absolute quasimap space. This paper provides a quasimap theory relative a s.n.c. divisor $D$ which removes these assumptions.

\subsection*{Acknowledgements}

I would like to wholeheartedly thank Dhruv Ranganathan for suggestions and guidance with this project as well as for invaluable feedback on drafts. I owe a great deal of thanks to Navid Nabijou for many helpful discussions as well as for feedback. I would also like to thank Tom Coates and Rachel Webb for helpful conversations. This work was supported by the EPSRC Centre for Doctoral
Training in Geometry and Number Theory at the Interface, grant number EP/L015234/1 and the UKRI Future Leaders Fellowship through
grant number MR/T01783X/1.

\section{Absolute quasimaps with divisors}\label{2}

Quasimaps are a variation on stable maps, where, roughly, one allows rational maps. In this section we recall the definition of absolute GIT quasimaps from \cite{ciocan2014stable} and explain how we will incorporate divisors.
Let $W= \Spec A$ be an affine algebraic variety with the action by a reductive algebraic group $G$. Let $\theta$ be a character inducing a linearisation for the action. We insist that
\begin{itemize}
	\item $W^s = W^{ss}$
	\item $W^s$ is nonsingular
	\item $G$ acts freely on $W^s$
	\item $W$ has only l.c.i singularities
\end{itemize}
Then quasimaps to $W \GIT G$ are defined as follows.
\begin{definition}\label{absquasimapdef}
Fix non-negative integers $g,n$ and $\beta \in \Hom(\Pic^G W, \ZZ)$. An $n$-marked \textit{stable quasimap} of genus $g$ and degree $\beta$ to $W \GIT G$ is 
	\begin{equation*}
	\left((C, p_1, \dots, p_n), u \right)
	\end{equation*}	
where
	\begin{enumerate}
		\item $(C, p_1, \dots, p_n)$ is an $n$-marked prestable curve of genus $g$.
		\item $u: C \rightarrow [W/G]$ of class $\beta$  (i.e. $\beta(L) = \deg_C u^*L$ for $L \in \Pic^G W$).
	\end{enumerate}
	which satisfy 
	\begin{enumerate}
		\item (Non-degeneracy) there is a finite (possibly empty) set $B \subset C$, disjoint from the nodes and markings, such that $\forall c \in C \setminus B$ we have $u(c) \in W^s$. 
		\item (Stability) $\omega_C(p_1 + \cdots + p_n) \otimes u^*L_{\theta}^{\epsilon}$ is ample $\forall \epsilon \in \QQ_{>0}$, where $L_{\theta}$ is the line bundle on $[W/G]$ associated to the trivial line bundle $\cO_W$ and the character $\theta$.
	\end{enumerate}
\end{definition}

There is a moduli space of stable quasimaps to $X=W \GIT G$ \cite[Definition 4.1.1]{ciocan2014stable}, which we denote $\cQ_{g,n}(X,\beta)$. If $2g-2+n \geq 0$ and $\beta$ is effective with respect to $L_{\theta}$ ~\cite[Definition 3.2.2]{ciocan2014stable}, then this moduli space is a proper, Deligne--Mumford stack admitting a perfect obstruction theory leading to quasimap invariants \cite[Theorem 4.1.2]{ciocan2014stable}. Our notation suppresses the dependence of the moduli space on the GIT presentation of $X$. If $X$ is a smooth projective toric variety, unless specified otherwise, we implicitly assume the presentation comes from the fan of $X$ (see Remark \ref{toric presentation}), agreeing with the theory of toric quasimaps in \cite{ciocan2010moduli}.

Now let $X= W \GIT G \hookrightarrow V \GIT G$ be a subvariety of a vector space quotient, defined with the same linearisation, coming from $W \hookrightarrow V$ (the prequotient embedding always exists ~\cite[Proposition 2.5.2]{ciocan2014stable}), and let $D$ be a \emph{smooth} divisor on $V \GIT G$, defined by a line bundle-section pair $(\cO_{V \GIT G}(D),s_{D})$, which pulls back to give a divisor on $W \GIT G$, which we also denote $D$. In order to build a moduli space of quasimaps to $(X,D)$ we need to make sense of the tangency of a quasimap to $X$ along $D$. For a genuine morphism, tangency is the order of vanishing of the pullback of the section cutting out the divisor. Although a quasimap does not come with a morphism to the target $X$, one can still build an analogous line bundle-section pair.

Since $V$ is a vector space, every line bundle is isomorphic to the trivial line bundle. Any character of $G$ induces a line bundle on $V \GIT G$ and sections of this line bundle correspond to $G$-equivariant sections of the trivial bundle on $V$. Any such pair defines a line bundle and section on the stack $[V/G]$. We further impose that every such line bundle-section pair $V \GIT G$ corresponds uniquely to a line bundle-section pair on the stack $[V/G]$. This imposes a restriction not on the ambient vector space quotient variety, but on the choice of presentation of this ambient variety. This is satisfied, for example, if $V \GIT G$ is a toric variety, with presentation induced by the fan, see Remark \ref{toric presentation}, or if $V \GIT G$ is a partial flag variety,  with GIT presentation given by ~\cite[Section 2.2.2]{coates2022abelian}. 

Given a quasimap $C \rightarrow [W/G]$, we then get a line bundle-section pair $(L_D,u_D)$ on $C$ via pullback from the composition $C \rightarrow [W/G] \rightarrow [V/G]$.

\begin{definition}\label{sminduce}
	Let $(X,D)$ be as above and suppose we have a family of  stable quasimaps over a scheme $S$ from a family of curves $C$ to $X$. Then we define  the morphism \textit{induced by $D$} to be the morphism \begin{equation}\label{sminduceeq}
	    C \rightarrow \agm
	\end{equation}
defined by the above line bundle-section pair on $C$.
\end{definition}

\begin{remark}\label{argr2lbs}
	If instead of a smooth divisor we have a simple normal crossings divisor $D= D_1 + \dots + D_r$, then we get $r$ induced maps to $\agm$, one for each component of $D$. Together these give a morphism 
 \begin{equation}\label{sncinduceeq}
     C \rightarrow [\AAA^r/\GG_m^r]
 \end{equation}
 which we once again call the morphism \emph{induced by $D$}.
\end{remark}

\begin{remark}\label{toric presentation}
    If $X$ is a smooth projective toric variety coming from a fan,  then there is an induced presentation, $X=\AAA^M \GIT\GG_m^s$, ~\cite[Theorem 2.1]{cox1995homogeneous}. In this case $\Pic(X) \cong \Pic([\AAA^M/\GG_m^s]) \cong \ZZ^s$ (see Section \ref{Section : Toric Divisiors}), and so every line bundle-section pair corresponds uniquely to a line bundle-section pair on the stack $[\AAA^M/\GG_m^s]$. 
\end{remark}

Below we include an example where the Picard groups of $V \GIT G$ and $[V/G]$ differ.

\begin{example}
    Consider $\PP^1 = \AAA^3 \GIT \GG_m^2$, where the action is given by $(\lambda,\mu) \cdot (x_0,x_1,t) = (\lambda x_0,\lambda x_1,\mu t)$ and the linearisation is given by the character $(1,1)$. Here, the presentation is not coming from the fan of $\PP^1$. If $D = Z(x_0) \subset \PP^1$ then one can, for example, choose $x_0 \cdot t^{n}$ as the section of the line bundle corresponding to the character $(1,n)$ on $[\AAA^3 \GIT \GG_m^2]$ for any $n \in \NN$. This is reflective of the fact that $\Pic(\PP^1) = \ZZ$ but $\Pic([\AAA^3/\GG_m^2]) = \ZZ^2$.
\end{example}

One can build a theory of logarithmic quasimaps in cases like the above example, but there is an additional choice of a line bundle-section pair on the ambient quotient stack. A canonical choice would just be the closure of $D$, which in the above example corresponds to $n=0$.

\section{Moduli space of logarithmic quasimaps}\label{modq}

Let $X = W \GIT G \hookrightarrow V \GIT G$ be as above and $D$ a simple normal crossings divisor on $W \GIT G$ pulled back from $V \GIT G$. In this section we will define the moduli space of logarithmic quasimaps to $(X,D)$. The key point will be that although there is no genuine map from a curve to $X$, we have the \emph{induced morphism} \eqref{sncinduceeq} which contains all the data concerning tangency. It is this morphism which we will make logarithmic in order to compactify.

An alternative way to approach the problem is to equip the quotient stack containing $X$ with the divisorial logarithmic structure with respect to the closure of $D$ and form a moduli space of logarithmic maps to this quotient stack. This is essentially equivalent, but phrasing it as above allows us to take advantage of existing moduli spaces. Specifically, in this section we can take advantage of the moduli space of logarithmic maps to $[\AAA^r/\GG_m^r]$.

We will assume a familiarity with logarithmic geometry, for an introduction to the subject, see \cite{abramovich2013logarithmic,gross2011book}.

\subsection{Moduli space of logarithmic maps to $[\AAA^r/\GG_m^r]$}

\begin{definition}\label{mlog}
	Let $g,n$ be non-negative integers and let $\alpha$ be an $r \times n$ matrix of non-negative integers $\alpha = (\alpha_{i,j})_{i,j}$. A family of logarithmic maps to $[\AAA^r/\GG_m^r]$ over $(S,\cM_S)$, a fine and saturated logarithmic scheme, is a diagram
	
	\begin{equation*}
	\begin{tikzcd}
	C_S \arrow[d] \arrow[r] & {\argmr} \arrow[d] \\
	S \arrow[r]             & \mathrm{pt}           
	\end{tikzcd}
	\end{equation*}
	where $C_S \rightarrow S$ is an $n$-marked family of genus $g$, logarithmic curves and $C_S \rightarrow \argmr$ is a logarithmic morphism to $\argmr$ equipped with the divisorial logarithmic structure with respect to the coordinate hyperplanes, with contact orders $\alpha$ at the markings.
\end{definition}

The natural base of a family above is a (fine and saturated) logarithmic scheme rather than a scheme. In order to have a well behaved moduli stack over schemes, one instead restricts to logarithmic maps which are universal with respect to pullback, called \emph{minimal} logarithmic maps, see, for example, ~\cite[Proposition 4.1.1]{chen2014stable} and \cite{gillam2012logarithmic}. In the literature this is sometimes referred to as a \emph{basic} logarithmic map ~\cite[Definition 1.20]{gross2013logarithmic}.

\begin{theorem}[\hspace{-0.01em}{\cite[Proposition 3.1]{abramovich2018birational}}]
	Minimal logarithmic maps form a logarithmic algebraic stack $\frM_{g,\alpha}^{\log}([\AAA^r/\GG_m^r])$ . 
\end{theorem}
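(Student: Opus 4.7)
My plan is to first reduce the problem to the case $r=1$ using the product structure of the target, then to construct the rank-one moduli space as a stratified union indexed by tropical combinatorial types, and finally to invoke the minimality machinery to glue these strata into a single logarithmic algebraic stack.

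For the reduction step, I would observe that $\argmr$ with its divisorial (toric) log structure is the $r$-fold fs product of $\agm$. Since a logarithmic morphism from a log curve into a product of fs log stacks is equivalent to the tuple of its component morphisms, there is a natural identification
\[
\frM_{g,\alpha}^{\log}(\argmr) \cong \frM_{g,\alpha_1}^{\log}(\agm) \times_{\frM_{g,n}^{\log}} \cdots \times_{\frM_{g,n}^{\log}} \frM_{g,\alpha_r}^{\log}(\agm),
\]
where $\alpha_i$ is the $i$-th row of $\alpha$ and $\frM_{g,n}^{\log}$ denotes the stack of $n$-marked genus-$g$ prestable log curves. A fibre product of log algebraic stacks over a log algebraic stack is again log algebraic, so it suffices to handle $r=1$.

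For the rank-one case, I would unpack that a logarithmic map $C \to \agm$ with prescribed contact orders $\alpha_i$ at the markings is the data of a line bundle-section pair $(L,s)$ on $C$ vanishing to the specified orders at the markings, together with a logarithmic enhancement of the classifying morphism; such an enhancement amounts to a choice of contact orders at the nodes subject to a balancing condition on each irreducible component. For fixed $g,n,\alpha_i$ only finitely many combinatorial types $\tau$ of such data can arise. For each $\tau$, the locus $\frM_\tau$ of log maps of that type should be a relative Picard-type stack over the corresponding boundary stratum of $\frM_{g,n}^{\log}$, carrying the toric log structure associated to the rational polyhedral cone $\sigma_\tau$ of compatible edge-lengths.

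The hard part will be the gluing: the cones $\sigma_\tau$ need to assemble into a cone complex and the $\frM_\tau$ must fit together compatibly under specialisation of the dual graph. This is precisely what the notion of minimality is engineered to resolve, following Gross--Siebert, Chen, and the general framework of \cite{abramovich2018birational}: the minimal base log structure at a given log map is dual to the smallest face of the ambient cone on which its tropical type can be realised, which singles out a canonical lift on each stratum and makes the specialisation maps compatible. Once this is in place, the stratified union inherits the structure of a logarithmic algebraic stack, locally of finite type over $\frM_{g,n}^{\log}$ and of finite type after fixing $\alpha$.
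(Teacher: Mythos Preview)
The paper does not supply a proof of this statement: it is quoted as a theorem from \cite{abramovich2018birational} and used as a black box. So there is no ``paper's own proof'' to compare against; your proposal is an outline of how the cited literature (Gross--Siebert, Chen, Abramovich--Chen, Abramovich--Wise) establishes such results rather than a reconstruction of anything in this manuscript.

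As a sketch your reduction step is sound and in fact is used later in the paper (see the proof of Lemma~\ref{qlogsnc}, where the identification $\frM_{g,\alpha}^{\log}(\argmr) = \prod_{i}\frM_{g,\alpha_i}^{\log}(\agm)\times_{\frM_{g,n}^r}\frM_{g,n}$ appears). Your rank-one discussion is broadly accurate in spirit, though it remains at the level of heuristics: you have not actually exhibited the strata $\frM_\tau$ as algebraic, nor checked the specialisation maps, so the ``hard part'' you identify is acknowledged but not carried out.

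One concrete error: your final sentence asserts that the stack is of finite type once $\alpha$ is fixed. This is false. The source curves are prestable, not stable, so there is no bound on the number of components; $\frM_{g,\alpha}^{\log}(\argmr)$ is only locally of finite type. The paper makes exactly this point in Lemma~\ref{finitetype}, where finite type of $\qlog$ is obtained only after invoking the quasimap stability condition to bound the number of components and then observing that the image in $\frM_{g,\alpha}^{\log}(\argmr)$ lands in a finite type substack.
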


\begin{remark}
	The logarithmic structure on $\largmr$ is the divisorial logarithmic structure with respect to the complement of the locus of maps to $\argmr$ from smooth curves which hit the boundary in finitely many points. With respect to this logarithmic structure $\largmr$ is logarithmically smooth ~\cite[Proposition 1.6.1]{abramovich2018birational}.
\end{remark}

\subsection{Moduli space of logarithmic quasimaps}

Let $g,n$ be non-negative integers, let $\beta$ be an effective quasimap degree on $X= W \GIT G$ and let $D=D_1 + \dots + D_r$ be a s.n.c. divisor pulled back from $V \GIT G$, with the simplifying assumption that the intersection of any subsets of the components of $D$ is connected. Let $\alpha$ be an $r \times n$ matrix of non-negative integers $\alpha= (\alpha_{i,j})_{i,j}$ such that $\sum_{j} \alpha_{i,j} = D_{i} \cdot \beta$.
\begin{definition}\label{logquasimap}
 Define $\qlog$ as the fibre product of stacks 
	\begin{equation*}
	\begin{tikzcd}
	\qlog \arrow[d] \arrow[r] & {\largmr} \arrow[d, "\pi_2"] \\
	\cQ_{g,n}(X,\beta) \arrow[r, "\pi_1"]  & \frM_{g,n}(\argmr).                        
	\end{tikzcd}
	\end{equation*}
	Here $\cQ_{g,n}(X,\beta)$ is the moduli space of stable quasimaps and $\frM_{g,n}(\argmr)$ is the stack of maps from $n$-marked, genus $g$ prestable curves to $\argmr$. The morphism $\pi_1$ is given by associating to a stable quasimap the induced map to $\argmr$ \eqref{sncinduceeq}, and $\pi_2$ is given by forgetting the logarithmic structure. This is a fibre product in both the category of ordinary stacks and in the fine and saturated category. 
\end{definition}

\begin{remark}
	We equip $\qlog$ with the logarithmic structure defined by pullback via the morphism $\qlog \rightarrow \largmr$.
\end{remark}

\begin{proposition}
	The moduli stack $\qlog$ is a Deligne--Mumford stack.
\end{proposition}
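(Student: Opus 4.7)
The approach is to exploit the fibre product presentation directly. All three stacks in the defining square of $\qlog$ are algebraic: $\cQ_{g,n}(X,\beta)$ by Ciocan-Fontanine--Kim--Maulik \cite{ciocan2014stable}, $\largmr$ by the theorem of \cite{abramovich2018birational} quoted above, and $\frM_{g,n}(\argmr)$ by the standard theory of stacks of prestable maps to an algebraic stack. Consequently $\qlog$ is algebraic, and it remains only to check that its diagonal is unramified, equivalently that the geometric automorphism groups are finite and reduced.

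The first step is to recall that $\cQ_{g,n}(X,\beta)$ is itself a Deligne--Mumford stack by \cite{ciocan2014stable}: quasimap stability cuts out finite, unramified automorphism groups inside the Artin stack of prestable maps. The second step is to show that the projection $\qlog \to \cQ_{g,n}(X,\beta)$, which by construction is the base change of $\largmr \to \frM_{g,n}(\argmr)$, has unramified relative diagonal. For this I would appeal to the minimality machinery of \cite{abramovich2018birational, chen2014stable, gross2013logarithmic}: given the underlying map to $\argmr$, the minimal logarithmic enhancement with prescribed contact orders $\alpha$ is determined up to a finite, discrete group of combinatorial symmetries recorded in the tropical data at nodes, so the relative inertia over $\frM_{g,n}(\argmr)$ is finite and unramified.

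Putting the two steps together, an automorphism of an object $(q, L)$ of $\qlog$ over $S$ consists of an automorphism $\phi$ of the underlying pointed prestable curve preserving both the quasimap $u \colon C \to [W/G]$ and the minimal log enhancement of the induced map to $\argmr$. The first constraint confines $\phi$ to the finite, unramified group $\mathrm{Aut}_{\cQ}(q)$; the log compatibility cuts out a closed subgroup, preserving finiteness and unramifiedness. Hence $\qlog$ has unramified diagonal and is Deligne--Mumford.

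The main obstacle is the relative DM statement for $\largmr \to \frM_{g,n}(\argmr)$. The target $\argmr$ is not proper, so DM-ness of $\largmr$ on the nose does not follow from a direct citation of standard theorems about log maps to projective log smooth targets; one must use that fixing the contact orders $\alpha$ rigidifies the underlying line bundle-section pairs and pins down the minimal log structure on the base up to a finite symmetry group, reducing to the general minimality theory.
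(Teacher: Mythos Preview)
Your approach is structurally the same as the paper's: algebraicity comes for free from the fibre product, $\cQ_{g,n}(X,\beta)$ is Deligne--Mumford, and the remaining work is to control what $\pi_2 \colon \largmr \to \frM_{g,n}(\argmr)$ contributes to automorphisms. The difference is that you argue $\pi_2$ is relatively Deligne--Mumford via an informal appeal to ``finite combinatorial symmetries'' of the minimal log structure, whereas the paper invokes the sharper fact that $\pi_2$ is \emph{representable}, citing \cite[Proposition~1.25]{gross2013logarithmic}. Representability means $\pi_2$ is injective on automorphism groups, so the relative inertia is not merely finite but trivial; your ``obstacle'' paragraph then evaporates. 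The paper's finiteness argument is then exactly your step~4 made precise: an automorphism of $(x,y,\lambda)$ is a pair $(\varphi_x,\varphi_y)$, there are finitely many choices of $\varphi_x$ since $\cQ_{g,n}(X,\beta)$ is DM, the compatibility $\pi_2(\varphi_y)=\lambda\circ\pi_1(\varphi_x)\circ\lambda^{-1}$ fixes the image of $\varphi_y$, and injectivity of $\pi_2$ on automorphisms then pins down $\varphi_y$ uniquely.

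So your proposal is correct, but you are working harder than necessary and leaving yourself an unresolved worry that a single citation dissolves.
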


\begin{proof}
	Since $\qlog$ is a fibre product of algebraic stacks it is automatically algebraic. The fact that it is a Deligne--Mumford stack will follow from the fact that the morphism $\pi_2$ is representable, which is true by ~\cite[Proposition 1.25]{gross2013logarithmic}. Given any cartesian diagram of algebraic stacks
	\begin{equation*}
	\begin{tikzcd}
	\cX \times_{\mathcal{Z}}\mathcal{Y} \arrow[d] \arrow[r] & \cY \arrow[d, "\pi_2"] \\
	\cX \arrow[r, "\pi_1"]                                  & \mathcal{Z}           
	\end{tikzcd}
	\end{equation*}
	with $\cX$ Deligne--Mumford and $\pi_2$ representable, then we can use the criterion ~\cite[Lemma 3.3.2]{cadman2007using} to show that any object $(x,y,\lambda)$, where $x \in \mathrm{Ob}(\cX), y \in \mathrm{Ob}(\cY), \lambda: \pi_1(x) \simeq \pi_2(y)$ must have finitely many automorphisms $(\varphi_x,\varphi_y) \in {\mathrm{Aut}_{\cX}(x) \times \mathrm{Aut}_{\cY}(y)}$. There are finitely many automorphisms, $\varphi_x$, as $\cX$ is Deligne--Mumford, but that fixes $\pi_2(\varphi_y) = \lambda \circ \pi_1(\varphi_x) \circ \lambda^{-1}$. On the other hand, $\pi_2$ is representable so it is an injection on automorphism groups, which determines $\varphi_y$.
\end{proof}

\begin{lemma}\label{finitetype}
	The moduli stack $\qlog$ is of finite type.
\end{lemma}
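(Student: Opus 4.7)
The plan is to use the fibre product description of $\qlog$ from Definition~\ref{logquasimap} and verify finite type by bounding each factor together with an appropriate finite type substack of the base $\frM_{g,n}(\argmr)$.

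First I would invoke \cite{ciocan2014stable} for the fact that the absolute quasimap stack $\cQ_{g,n}(X,\beta)$ is of finite type, and \cite{abramovich2018birational} for the fact that the logarithmic mapping stack $\largmr$ with fixed discrete data $(g,n,\alpha)$ is of finite type (boundedness of minimal log maps with prescribed contact orders). Next I would restrict attention to the substack $\frM^{\beta} \subset \frM_{g,n}(\argmr)$ parametrising maps whose $r$ underlying line bundles $L_{D_i}$ have degree at most $D_i \cdot \beta$. This substack is itself of finite type: prestable curves form a finite type stack $\frM_{g,n}$, the bounded-degree relative Picard is finite type over it, and parametrising the accompanying sections adds another finite type layer.

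Since a degree-$\beta$ quasimap produces line bundles $L_{D_i}$ of degree exactly $D_i \cdot \beta$ on its source curve, $\pi_1$ factors through $\frM^{\beta}$, and the hypothesis $\sum_j \alpha_{i,j} = D_i \cdot \beta$ ensures that $\pi_2$ also factors through $\frM^{\beta}$. Consequently $\qlog$ is the fibre product of two finite type stacks over the finite type base $\frM^{\beta}$, hence is itself of finite type. The main subtlety I anticipate is lining up the two boundedness statements — degree bounds coming from $\beta$ on the quasimap side with degree bounds coming from $\alpha$ on the logarithmic side — which is precisely what the compatibility condition in the definition of admissible contact data ensures.
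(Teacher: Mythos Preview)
Your argument has a genuine gap: neither $\largmr$ nor $\frM_{g,n}$ is of finite type. Both are only \emph{locally} of finite type, because a prestable curve of genus $g$ with $n$ markings can have arbitrarily many irreducible components. For the same reason your substack $\frM^{\beta}$ is not of finite type either: bounding the total degree of the line bundles $L_{D_i}$ does nothing to control the number of components of the source curve (and the multidegree on a curve with many components is unbounded even when the total degree is fixed). So the step ``fibre product of two finite type stacks over a finite type base'' fails at both the finite type hypothesis on $\largmr$ and on $\frM^{\beta}$.

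The paper's proof supplies exactly the missing ingredient. It uses only that $\largmr$ is locally of finite type, and then observes that the stability condition in the definition of a quasimap forces a bound on the number of components of the source curve once $g,n,\beta$ are fixed. This bound cuts out a finite type open substack of $\largmr$ through which the projection $\qlog \to \largmr$ necessarily factors; combined with the fact that the other projection $\qlog \to \cQ_{g,n}(X,\beta)$ is of finite type (being pulled back from a morphism between locally finite type stacks with finite type source), this yields the result. Your degree bound is not needed and would not help; what is needed is a component bound, and that comes from quasimap stability, not from the contact data $\alpha$.
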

\begin{proof}
By ~\cite[proof of Proposition 3.2]{abramovich2018birational}, $\largmr$ is locally of finite type, so it suffices to show that $\qlog \rightarrow \largmr$ factors through a finite type substack of $\largmr$. But this follows from the fact that once we have fixed numerical data $g,n,\beta$, the number of components of the curve occurring in $\cQ_{g,n}(X,\beta)$ is bounded, as well as the total degree of each of the $r$ line bundles.
\end{proof}
\begin{proposition}
	The moduli space $\qlog$ is proper.
\end{proposition}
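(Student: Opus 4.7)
The plan is to verify properness via the valuative criterion, viewing $\qlog$ as a fibre product above a proper base. Lemma~\ref{finitetype} already gives finite type, so it remains to verify separatedness and universal closedness.

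The key observation is that the projection $\qlog \to \cQ_{g,n}(X,\beta)$ is the base change along $\pi_1$ of the forgetful morphism $\pi_2 \colon \largmr \to \frM_{g,n}(\argmr)$. The absolute quasimap stack $\cQ_{g,n}(X,\beta)$ is proper by \cite{ciocan2014stable}, so the task reduces to proving that $\pi_2$ is proper on the substack determined by the fixed discrete data $(g, n, \alpha)$. This is essentially the core properness statement of logarithmic Gromov--Witten theory applied to the smooth Artin fan $\argmr$, and is part of the conclusion of \cite{abramovich2018birational}; the proof proceeds via the valuative criterion, using the tropical combinatorics of the contact orders $\alpha$ together with the dual graph of the central fibre to pin down the minimal logarithmic structure on the extension.

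With these two inputs the valuative criterion for $\qlog$ unfolds as follows: starting from a logarithmic quasimap over $\Spec K$ for $K$ the fraction field of a DVR $R$, properness of $\cQ_{g,n}(X,\beta)$ extends the underlying quasimap uniquely over $\Spec R$ (after possibly a finite base change); the induced line bundle--section pairs extend functorially, extending the underlying map \eqref{sncinduceeq} to $\argmr$; and properness of $\pi_2$ then extends the logarithmic enhancement uniquely. The main obstacle I anticipate is ensuring compatibility between the two extensions, namely that the underlying map to $\argmr$ produced from the extended quasimap coincides with the underlying map to which the logarithmic extension applies. This is automatic from the functoriality of \eqref{sncinduceeq} --- both underlying maps are produced from the same pullback of the universal line bundle--section pair --- so the real content of the argument is carried by the properness of $\cQ_{g,n}(X,\beta)$ and $\pi_2$, glued together by the fibre-product formalism.
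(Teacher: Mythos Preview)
Your approach is essentially the same as the paper's: both reduce properness of $\qlog$ to properness of $\cQ_{g,n}(X,\beta)$ together with properness of the forgetful morphism $\pi_2$, the latter being a known result from the foundations of logarithmic Gromov--Witten theory (the paper cites \cite[Theorem 4.1]{gross2013logarithmic}). The only difference is that you unfold the valuative criterion by hand, whereas the paper simply invokes the formal facts that properness is stable under base change and composition; your extra verification is correct but unnecessary once $\pi_2$ is known to be proper.
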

\begin{proof}
	Since $\cQ_{g,n}(X,\beta)$ is proper, to prove properness of $\qlog$ it suffices to prove properness of the morphism $\pi_2 : \largmr \rightarrow \frM_{g,n}(\argmr)$. Properness of $\pi_2$ follows from ~\cite[proof of Theorem 4.1]{gross2013logarithmic}. 
\end{proof}

\begin{proposition}\label{lemma : finite}
    The morphism $\qlog \rightarrow \cQ_{g,n}(X,\beta)$ is finite.
\end{proposition}
\begin{proof}
    It suffices to show that $\frM^{\log}_{g,\alpha}([\AAA^r/\GG_m^r]) \rightarrow \frM_{g,n}([\AAA^r/\GG_m^r])$ is finite. This follows from ~\cite[proof of Corollary 3.7]{abramovich2014stable}.
\end{proof}

The papers \cite{chen2014stable, abramovich2014stable} construct the moduli space of stable logarithmic maps by first constructing the moduli space in the case where $D$ is a smooth divisor, and then using this to build the moduli space when $D$ is s.n.c. The same approach would have worked in the quasimap setting.

\begin{lemma}\label{qlogsnc}
	Let $\alpha_i$ denote the $i^{\mathrm{th}}$ row of the $r \times n$ matrix $\alpha$. Then the moduli space $\qlog$ fits into a cartesian diagram (in the fine and saturated category)
	\begin{equation}\label{snclogquasimap}
	\begin{tikzcd}
	\qlog \arrow[d] \arrow[r] & {\cQ^{\log}_{g,\alpha_{1}}(X|D_1,\beta) \times \dots \times \cQ^{\log}_{g,\alpha_r}(X|D_r,\beta) } \arrow[d] \\
	\cQ_{g,n}(X,\beta) \arrow[r, "\Delta"]  & \cQ_{g,n}(X,\beta) \times \dots \times \cQ_{g,n}(X,\beta)                            
	\end{tikzcd}
	\end{equation}
	where the logarithmic structure on $\cQ_{g,n}(X,\beta)$ is pulled back from $\frM_{g,n}$.
\end{lemma}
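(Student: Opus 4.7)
The plan is to identify both sides of the claimed cartesian square with the same iterated fs fibre product, by leveraging the product decomposition $\argmr = (\agm)^r$.

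First I would check that this product decomposition lifts to the logarithmic and prelogarithmic moduli problems. Since the divisorial logarithmic structure on $\argmr$ is the sum of the pullbacks of the divisorial logarithmic structures from each $\agm$ factor, giving an fs logarithmic morphism $C \to \argmr$ from a log curve is the same data as giving $r$ fs logarithmic morphisms $C \to \agm$ on the same underlying log curve, and the contact order matrix $\alpha$ decouples row-by-row into contact orders $\alpha_i$ for each factor. This yields a canonical isomorphism
\begin{equation*}
\largmr \;\cong\; \frM^{\log}_{g,\alpha_1}(\agm) \times_{\frM_{g,n}}^{\mathrm{fs}} \cdots \times_{\frM_{g,n}}^{\mathrm{fs}} \frM^{\log}_{g,\alpha_r}(\agm)
\end{equation*}
in the fs category, and analogously $\frM_{g,n}(\argmr) \cong \frM_{g,n}(\agm) \times_{\frM_{g,n}} \cdots \times_{\frM_{g,n}} \frM_{g,n}(\agm)$, with the forgetful map $\pi_2$ respecting both decompositions.

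Second I would rewrite Definition \ref{logquasimap} using these identifications, so that $\qlog$ is exhibited as the fs fibre product of $\cQ_{g,n}(X,\beta)$ against the $r$-fold fs fibre product of the $\frM^{\log}_{g,\alpha_i}(\agm)$ over $\frM_{g,n}$, taken over the unsaturated product of the $\frM_{g,n}(\agm)$'s. Third I would compute the fs fibre product
\begin{equation*}
\cQ_{g,n}(X,\beta) \times_{\cQ_{g,n}(X,\beta)^r} \left(\cQ^{\log}_{g,\alpha_1}(X|D_1,\beta) \times \cdots \times \cQ^{\log}_{g,\alpha_r}(X|D_r,\beta)\right)
\end{equation*}
using the defining identity $\cQ^{\log}_{g,\alpha_i}(X|D_i,\beta) = \cQ_{g,n}(X,\beta) \times_{\frM_{g,n}(\agm)} \frM^{\log}_{g,\alpha_i}(\agm)$. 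A standard manipulation of iterated (fs) fibre products then identifies this with the same object produced in the previous paragraph, which closes the square.

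I expect the main obstacle to be the first step: verifying that the decomposition of $\largmr$ as an iterated fs fibre product holds with the correct minimal log structure and correct contact orders. Saturation interacts subtly with fibre products, so one must check that the minimal fs logarithmic enhancement of an underlying morphism to $(\agm)^r$ agrees with the coproduct, in fs monoids, of the minimal enhancements to each factor. This is precisely the mechanism by which Abramovich--Chen--Gross--Siebert bootstrap the snc moduli space of stable log maps from the smooth-divisor case, and the same argument applies here because the quasimap data only affects the underlying morphism to the Artin fan and does not interact with the logarithmic saturation.
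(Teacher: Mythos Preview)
Your proposal is correct and follows essentially the same approach as the paper. Both arguments hinge on the identifications $\frM_{g,n}(\argmr) \cong \frM_{g,n}(\agm)^r \times_{\frM_{g,n}^r} \frM_{g,n}$ and $\largmr \cong \prod_i \frM^{\log}_{g,\alpha_i}(\agm) \times_{\frM_{g,n}^r} \frM_{g,n}$ in the fs category, followed by a routine manipulation of iterated fibre products; the paper organises this as a single large diagram chase (in the style of \cite[2.2]{battistella2022gromov}) while you describe the same computation in words, and you correctly flag the fs decomposition of $\largmr$ as the only nontrivial input.
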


\begin{proof}
	This is very similar to the argument in ~\cite[2.2]{battistella2022gromov}. Consider the diagram
	\begin{equation*}
	\begin{tikzcd}
	\qlog \arrow[r] \arrow[d]      & {\prod_{i=1}^r \frM_{g,\alpha_i}^{\log}(\agm) \times_{\frM_{g,n}^r} \frM_{g,n}} \arrow[d] \arrow[r] & {\prod_{i=1}^r\frM_{g,\alpha_i}^{\log}(\agm)} \arrow[d] \\
	{\cQ_{g,n}(X,\beta)} \arrow[r] & {\frM_{g,n}(\agm)^{r} \times_{\frM_{g,n}^r}\frM_{g,n}} \arrow[r] \arrow[d]                                                                             & {\frM_{g,n}(\agm)^{r}} \arrow[d]                                                              \\
	& {\frM_{g,n}} \arrow[r]                                                                                                                                 & {\frM_{g,n}^{r}}.                                                                             
	\end{tikzcd}
	\end{equation*}
	
Now, $\frM_{g,n}(\agm)^r \times_{\frM_{g,n}^r} \frM_{g,n} = \frM_{g,n}([\AAA^r/\GG_m^r])$ and by ~\cite[Theorem 2.6]{abramovich2014stable} we have that $\prod_{i=1}^r \frM_{g,\alpha_i}^{\log}(\agm) \times_{\frM_{g,n}^r} \frM_{g,n} = \frM_{g,\alpha}^{\log}([\AAA^r/\GG_m^r])$. Note that the latter is only true in the fine and saturated category. We can conclude that the top left square is (and so all squares are) cartesian, in the fine and saturated category, by the definition of $\qlog$ as in Definition \ref{logquasimap}. 

This tells us that the outer square in the following diagram is cartesian

\begin{equation}
 \begin{tikzcd}
\qlog \arrow[d] \arrow[r]                 & {\prod\limits_{i=1}^r \cQ_{g,\alpha_i}^{\log}(X|D_i,\beta)} \arrow[d] \arrow[r] & {\prod_{i=1}^r \frM_{g,\alpha_i}^{\log}(\agm)} \arrow[d] \\
{ \cQ_{g,n}(X,\beta)} \arrow[r, "\Delta"] & { \cQ_{g,n}(X,\beta)^r} \arrow[r]                                               & {\frM_{g,n}(\agm)^{r}}.                                 
\end{tikzcd}
\end{equation}

 Since the right hand square is also cartesian, the result follows.

\end{proof}


\begin{remark}\label{logepsilonquasimaps}
The entire construction could have equally been used to build a moduli space parametrising logarithmic $\epsilon$-quasimaps for any $\epsilon \in \QQ_{> 0}$. This would involve replacing $\cQ_{g,n}(X,\beta)$ with $\cQ^{\epsilon}_{g,n}(X,\beta)$ in the definitions. Moreover, the construction of the virtual fundamental class, Theorem \ref{logqvir}, also works for any $\epsilon$. We will not make use of this here but it will be important in any future application for wall-crossing.
\end{remark}

\section{Examples}\label{4}

Having built these moduli spaces, we now examine their geometry in a few simple examples and compare them with the corresponding moduli spaces in Gromov--Witten theory. We will examine a smooth divisor example, a simple normal crossings divisor example and a higher genus example, all for projective space targets. The first example is strictly speaking covered by \cite{battistella2021relative}, but we include it here anyway to show that even in the cases where the moduli space is as nice as possible, the quasimap moduli spaces are less complex.

\begin{example}\label{PNHlogquasimapspace}
Let $X= \PP^N$, let $D=H$ a hyperplane and suppose we are in genus $0$, degree $d$. We let $n=2$ be the number of markings and let $\alpha=(d,0)$ i.e. we are considering degree $d$ (quasi)maps to $\PP^N$ from rational curves with maximal tangency to the hyperplane $H$ at the first marking. We will compare $\cQ^{\log}_{0,(d,0)}(\PP^N|H,d)$ with $\overline{\cM}_{0,(d,0)}^{\log}(\PP^N|H,d)$. 
	
	$\overline{\cM}_{0,(d,0)}^{\log}(\PP^N|H,d)$ is irreducible of the expected dimension and we will see in Lemma \ref{PNHirred} that $\cQ_{0,(d,0)}^{\log}(\PP^N|H,d)$ is irreducible of the same dimension for the same reason. More concretely, both spaces (or rather their images in $\overline{\cM}_{0,2}(\PP^N,d)$ (resp. $\cQ_{0,2}(\PP^N,d)$) are the closures of the locus of \underline{\emph{maps}} $$(\PP^1,p_1,p_2) \rightarrow \PP^N$$ which are not mapped entirely into the hyperplane and hit $H$ with maximal tangency at $p_1$. Comparing these spaces with their images in the corresponding absolute spaces is a reasonable thing to do as the morphisms of moduli spaces here are finite (Lemma \ref{lemma : finite}) and generically injective. On the other hand, the quasimap spaces do not allow rational tails, curves with rational components with a single special point. Consequently, there will be fewer boundary divisors in $\cQ^{\log}_{0,(d,0)}(\PP^N|H,d)$ than in $\overline{\cM}_{0,(d,0)}^{\log}(\PP^N|H,d)$. In both cases the boundary divisors are \emph{comb loci}. These are loci where the source curve is of the form $C_0 \cup \dots \cup C_r$ with $C_i$ smooth, $C_i \cap C_0$ is a single nodal point  $i \neq 0$ and $C_i \cap C_j = \emptyset$ for $i,j \neq 0$. Moreover, $C_0$ contains the non-trivial tangency marking and gets mapped entirely into $H$, whereas $C_i$ only hits $H$ at the connecting node for $i \neq 0$.

	A comb locus in the quasimap space has domain curve containing at most two components, indeed this is implied by stability condition in Definition \ref{absquasimapdef}. There is a divisor corresponding to when $C=C_0$ which falls entirely into the hyperplane and $d-1$ distinct comb loci with two components $C=C_0 \cup C_1$ corresponding to the different possible degrees on each branch. However, in the Gromov--Witten space there can be up to $d+1$ components on the source curve of a comb locus. Each external component $C_i, \, \, i \neq 0$ must have positive degree and so there is a comb locus where each of the $d$ external components have degree 1 and the interior component carrying the maximal tangency marking is contracted. Below we include the number of boundary divisors in the quasimap and Gromov--Witten case to make the point that the quasimap spaces are much simpler.
	
	\begin{center}
		\begin{tabular}{||c c c||} 
			\hline
			$d$ & $\cQ_{0,(2,0)}^{\log}(\PP^N|H,d)$ & $\overline{\cM}_{0,(2,0)}^{\log}(\PP^N|H,d)$ \\ [0.5ex] 
			\hline\hline
			1 & 1 & 3  \\ 
			\hline
			2 & 2 & 7  \\
			\hline
			3 & 3 & 14  \\
			\hline
			4 & 4 & 26  \\
			\hline
			5 & 5 & 45  \\
			\hline
			6 & 6 & 75 \\
			\hline
		\end{tabular}
	\end{center}
\end{example}
These numbers are computed by enumerating genus-zero degree-weighted balanced tropical maps to $\RR_{\geq 0}$ which satisfy the appropriate stability condition. The connection to tropical maps can be found, for example in ~\cite[Section 2.5]{abramovich2020decomposition} and ~\cite[Theorem 1.1]{kennedy2022divisors}.
\begin{remark}
    In \cite{kennedy2022divisors}, a formula is determined for the number of boundary divisors of $\overline{\mathcal{M}}_{0,(d,0,\dots,0)}^{\log}(\PP^N|H,d)$, i.e. the logarithmic Gromov--Witten moduli space for $(\PP^N,H)$ with one maximal tangency marking and any number of redundant markings in any degree. 
\end{remark}
\begin{example}\label{p22linesQvsGW}
	We now move on to a simple normal crossings example. In the stable maps case this example comes from ~\cite[Section 1.2]{nabijou2022gromov} and the ideas come from ~\cite[Chapter 3]{nabijou2019recursion}. Let $X=\PP^2$ and $D=H_1 + H_2$ be the union of two hyperplanes. Let $g=0, d=2$ and let $\alpha$ be given by the matrix
	\begin{equation*}
	\begin{pmatrix}
	2 & 0 \\
	0 & 2
	\end{pmatrix}.
	\end{equation*}
	In other words we are considering degree $2$ (quasi)maps to $\PP^2$ with maximal tangency to $H_1$ at the first marking and maximal tangency to $H_2$ at the second marking. As in the previous example, $\cQ^{\log}_{0,\alpha}(\PP^2|H_1 +H_2,2)$ and ${\overline{\cM}^{\log}_{0,\alpha}(\PP^2|H_1 +H_2,2)}$ are irreducible of the same dimension but have differing numbers of boundary divisors. Instead of enumerating them we will make a different comparison. Recall that $\cQ^{\log}_{0,\alpha}(\PP^2|H_1 +H_2,2)$ is defined via the fibre product 
	\begin{equation*}
	\begin{tikzcd}
	{\cQ^{\log}_{0,\alpha}(\PP^2|H_1 +H_2,2)} \arrow[d] \arrow[r] & {\cQ^{\log}_{0,\alpha_2}(\PP^2|H_2,2)} \arrow[d] \\
	{\cQ^{\log}_{0,\alpha_1}(\PP^2|H_1,2)} \arrow[r]              & {\cQ_{0,2}(\PP^2,2)}                            
	\end{tikzcd}
	\end{equation*}
	where $\alpha_1 = (2,0)$ and $\alpha_2 = (0,2)$ (this is equivalent to \eqref{snclogquasimap}), in the category of fine and saturated logarithmic stacks. Similarly $\overline{\cM}^{\log}_{0,\alpha}(\PP^2|H_1 +H_2,2)$ \emph{can be} defined as the fibre product (in the fine and saturated category) \cite{abramovich2014stable}
	\begin{equation*}
	\begin{tikzcd}
	{\overline{\cM}^{\log}_{0,\alpha}(\PP^2|H_1 +H_2,2)} \arrow[d] \arrow[r] & {\overline{\cM}^{\log}_{0,\alpha_2}(\PP^2|H_2,2)} \arrow[d] \\
	{\overline{\cM}^{\log}_{0,\alpha_1}(\PP^2|H_1,2)} \arrow[r]              & {\overline{\cM}_{0,2}(\PP^2,2)}.                            
	\end{tikzcd}
	\end{equation*}
	Recall from the previous example that $\overline{\cM}^{\log}_{0,\alpha_i}(\PP^2|H_i,2) \rightarrow \overline{\cM}_{0,2}(\PP^2,2)$ is finite and generically injective. The (image of the) ordinary fibre product would be the intersection of (the images of) $\overline{\cM}^{\log}_{0,\alpha_1}(\PP^2|H_1,2)$ and $\overline{\cM}^{\log}_{0,\alpha_2}(\PP^2|H_2,2)$, each of which corresponded to the closure of the loci corresponding to maps from irreducible curves which don't map into $H_i$. So the image of the ordinary fibre product in $\overline{\cM}_{0,2}(\PP^2,2)$ is the intersection of the closure of these loci. On the other hand $\overline{\cM}^{\log}_{0,\alpha}(\PP^2|H_1 +H_2,2)$ is also irreducible and maps finitely and generically injectively into $\overline{\cM}_{0,2}(\PP^2,2)$. This image is also the closure of the locus of maps from irreducible curves which don't map entirely to either hyperplane. So comparing the ordinary fibre product and $\overline{\cM}^{\log}_{0,\alpha}(\PP^2|H_1 +H_2,2)$ amounts to the difference between the intersection of the closures and the closure of the intersection. In this case the ordinary fibre product is strictly larger than $\overline{\cM}^{\log}_{0,\alpha}(\PP^2|H_1 +H_2,2)$. By a dimension count $\overline{\cM}^{\log}_{0,\alpha}(\PP^2|H_1 +H_2,2)$ is $3$-dimensional. On the other hand, there is a locus in the ordinary fibre product corresponding to source curves of the form $C_0 \cup C_1 \cup C_2$, where $C_0$ is attached to $C_1$ and $C_2$ at nodes, both markings lie on $C_0$ and this component gets contracted to the point of intersection in $H_1 \cap H_2$. This locus has dimension 3 and forms the only other irreducible component of the ordinary fibre product. So in this case $\overline{\cM}^{\log}_{0,\alpha}(\PP^2|H_1 +H_2,2)$ is not the same as the ordinary fibre product.
	
	We could instead compare $\cQ^{\log}_{0,\alpha}(\PP^2|H_1 +H_2,2)$ with the ordinary fibre product of $\cQ^{\log}_{0,\alpha_1}(\PP^2|H_1,2)$ and $\cQ^{\log}_{0,\alpha_1}(\PP^2|H_2,2)$ over $\cQ_{0,2}(\PP^2,2)$. The dimension counts are all identical but due to the quasimap stability condition, even in the ordinary fibre product there can be no component with source curve $C_0 \cup C_1 \cup C_2$ with both markings on $C_0$, because $C_1$ and $C_2$ contain only a single special point. The consequence is that unlike in the stable maps case, $\cQ^{\log}_{0,\alpha}(\PP^2|H_1 +H_2,2)$ is the same underlying space as the ordinary fibre product.
	
	One reason for pointing out this difference is that the ordinary fibre product can be equipped with a `virtual' class leading to invariants ~\cite[Introduction, page 2]{nabijou2022gromov}. These invariants are equal to the orbifold invariants of the multi root stack given by rooting $\PP^2$ along $H_1$ and $H_2$, for sufficiently large rooting parameters \cite[Theorem B]{battistella2022local}. The orbifold Gromov--Witten invariants of a root stack coincide in genus-zero with the logarithmic Gromov--Witten invariants when the divisor is smooth \cite{abramovich2017relative}, but differ when the divisor is simple normal crossings. In the case above the difference can be attributed to the excess component in the ordinary fibre product. On the other hand there is no difference in the quasimap setting, which suggests that the difference between logarithmic and orbifold quasimap invariants may be less pronounced.
\end{example}
\begin{example}
    Let $X$ be a smooth projective toric variety associated to a fan $\Sigma$ and $D = \partial X$ be the full toric boundary. Then for any valid discrete data $g,n,\beta,\alpha$ we have that $\overline{\cM}^{\log}_{g,\alpha}(X|\partial X,\beta) = \cQ^{\log}_{g,\alpha}(X|\partial X,\beta)$. Stable maps without rational tails are stable quasimaps, and stable quasimaps without basepoints are stable maps, so it suffices to show that neither rational tails nor basepoints are possible here. Since any non-constant morphism from a rational curve must hit the boundary in at least two points we must have that this rational component contains at least two special points, so a stable logarithmic map contains no rational tails. Now we show that any stable logarithmic quasimap cannot have a basepoint. Any basepoint of a quasimap occurs when, for any \emph{primitive collection} ~\cite[Definition 5.1.5]{cox2011toric} of $\Sigma$, the corresponding  sections of the line bundles for the toric boundary divisors vanish. For example, if $X=\PP^N$ then the only primitive collection is the union of all the rays, and a basepoint occurs only when \emph{all} sections defining the quasimap vanish simultaneously. Any such primitive collection consists of rays where the intersection of the corresponding toric divisors is empty. In the case where $X=\PP^N$ the intersection of all coordinate hyperplanes is clearly empty. Consequently, the curve component containing any basepoint would have at least one section corresponding to a ray in the primitive collection which doesn't vanish identically. On the other hand, a basepoint must occur at a point where this section vanishes, but since these points are either marked or nodes they cannot be basepoints.
\end{example}

\section{Logarithmic quasimap invariants}\label{5}

Let $\qlog$  be the moduli space from Definition \ref{logquasimap}. We will produce a virtual fundamental class on this moduli space using the construction of Behrend and Fantechi \cite{behrend1997intrinsic}.
\begin{lemma}\label{logmap2stack}
	Recall $X = W \GIT G \hookrightarrow [W/G]$. The divisor $D$ induces a logarithmic structure on the quotient stack $[W/G]$ pulled back from the toric logarithmic structure on $[\AAA^r/\GG_m^r]$. Moreover, a logarithmic quasimap to $(X,D)$ from a curve $C$ induces a logarithmic structure on $C$ and a logarithmic morphism $C\rightarrow [W/G]$.
\end{lemma}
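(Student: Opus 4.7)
The plan is to first build a morphism $\rho : [W/G] \to \argmr$ that realises the divisor data, and then define the log structure on $[W/G]$ by pullback; the second assertion will then fall out of the universal property of pulled-back log structures.

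First I construct $\rho$. Because $D$ is pulled back from $V \GIT G$, each component $D_i$ is cut out on $V \GIT G$ by a line bundle-section pair which, as recalled in Section \ref{2}, corresponds to a character $\chi_i : G \to \Gm$ and a $G$-equivariant morphism $s_i : V \to \AAA^1_{\chi_i}$. Packaging these together produces an equivariant morphism $s = (s_1,\dots,s_r) : V \to \AAA^r$ intertwining $G$ with $\Gm^r$ through $(\chi_1,\dots,\chi_r)$, which descends to a morphism $[V/G] \to \argmr$. Restricting along $[W/G] \hookrightarrow [V/G]$ yields the desired $\rho : [W/G] \to \argmr$. Equipping $\argmr$ with its toric (divisorial) log structure $\cM_{\mathrm{tor}}$ relative to the coordinate hyperplanes, the formula $\cM_{[W/G]} := \rho^* \cM_{\mathrm{tor}}$ produces a fine and saturated log structure on $[W/G]$, proving the first claim.

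For the second claim, I first observe that for any quasimap $u : C \to [W/G]$, the morphism induced by $D$ in the sense of Remark \ref{argr2lbs} factors as $\rho \circ u$. Indeed, the line bundle-section pair defining the $i$-th coordinate of the induced morphism is by construction $u^*(\cO(D_i), s_{D_i})$, and the pair $(\cO(D_i), s_{D_i})$ on $[W/G]$ is itself $\rho^*$ of the tautological pair on the $i$-th factor of $\argmr$. By Definition \ref{logquasimap}, a logarithmic quasimap is the data of a quasimap $u$ together with a logarithmic enhancement of the morphism $\rho \circ u$ to a log morphism $(C,\cM_C) \to (\argmr, \cM_{\mathrm{tor}})$. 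The universal property of pulled-back log structures then identifies such a lift with the datum of a log morphism $(C,\cM_C) \to ([W/G], \rho^*\cM_{\mathrm{tor}})$ over $u$, which is precisely the claimed log lift.

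There is no real obstacle here: once the morphism $\rho$ is written down, the argument is essentially formal. The only points that require care are the identification of the induced morphism \eqref{sncinduceeq} with $\rho \circ u$, which is a matter of unwinding the definition of the line bundle-section pair attached to $D_i$, and the verification that $\rho^*\cM_{\mathrm{tor}}$ is fine and saturated (which is automatic since pullback of fs log structures along an arbitrary morphism of fs log stacks is fs).
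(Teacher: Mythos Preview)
Your proof is correct and follows essentially the same approach as the paper's: both define the log structure on $[W/G]$ as the pullback along the morphism to $\argmr$, and both deduce the second assertion from the fact that this morphism is then strict (which you phrase as the universal property of pulled-back log structures). Your version is simply more explicit in constructing $\rho$ and in verifying that the induced morphism of Remark~\ref{argr2lbs} is $\rho \circ u$, details the paper leaves implicit.
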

\begin{proof}
	We have a morphism $$[W/G]\rightarrow [\AAA^r/\GG_m^r]. $$
	Moreover, this morphism is strict, so the map from the underlying curve of $C$ to $[W/G]$ together with a logarithmic morphism $$C \rightarrow [\AAA^r/\GG_m^r]$$ defines a logarithmic morphism to $[W/G]$.
\end{proof}

 Therefore we have a diagram
\begin{equation*}
\begin{tikzcd}
\cC_{\cQ} \arrow[d,"\pi"] \arrow[r] \arrow[rr, "u", bend left] & \cC_{\frM} \arrow[d] \arrow[rr, bend right] & {[W/G]} \arrow[r] & {[\AAA^r/\GG_m^r]} \\
\qlog \arrow[r]                                          & {\frM_{g,\alpha}^{\log}([\AAA^r/\GG_m^r])}   &                   &                   
\end{tikzcd}
\end{equation*}
Here $\cC_{\cQ}$ and $\cC_{\frM}$ are the universal curves over the moduli spaces $\qlog$ and $\frM^{\log}_{g,\alpha}([\AAA^r/\GG_m^r])$, $\pi$ is the projection $\cC_{\cQ} \rightarrow \qlog$ and $u$ is the universal map defined in Lemma \ref{logmap2stack}. Furthermore, the square is cartesian in both the fine, saturated category and the ordinary category. We impose in this section that the morphism $[W/G] \rightarrow [\AAA^r/\GG_m^r]$ be flat, used in the proof of Proposition \ref{obstructiontheory}, but we expect this assumption to usually hold.

\subsection{Obstruction theory} In \cite{olsson2003logarithmic} the author introduces the stack of logarithmic structures. Namely, if $(S,\cM_S)$ is a fine logarithmic scheme, then there is a fibered category over $(Sch)/S$, called $\mathrm{Log}_{(S,\cM_S)}$, where the objects over a morphism of schemes $T \rightarrow S$ is an enhancement of this morphism to fine logarithmic schemes $(T,\cM_T) \rightarrow (S,\cM_S)$. The main result of \cite{olsson2003logarithmic} is that $\mathrm{Log}_{(S,\cM_S)}$ is an algebraic stack, locally of finite presentation. Using this, Olsson defines the logarithmic cotangent complex as follows.

\begin{definition}\cite[Definition 3.2]{olsson2005logarithmic}
	Let $X \rightarrow Y$ be a morphism of fine logarithmic schemes. Let $X \rightarrow \mathrm{Log}_{Y}$ be the induced morphism. Define $\mathbb{L}^{\log}_{X/Y}$, the \emph{logarithmic cotangent complex}, to be the ordinary cotangent complex of this morphism.
\end{definition}

We will need an extension of this theory to certain algebraic stacks. In \cite{gross2013logarithmic}, the authors did this for Deligne--Mumford stacks by reducing to an étale cover. Our situation is slightly more general and we will just define the logarithmic cotangent complex in the same way.

Let $\LL^{\log}_{[W/G]}$ denote the \emph{logarithmic cotangent complex} of $[W/G]$. This is defined as the ordinary cotangent complex of the morphism $$[W/G] \rightarrow \mathrm{Log}_{\CC}$$  where the latter denotes the stack of logarithmic structures for $\Spec \CC$ with the trivial logarithmic structure. 

\begin{remark}
	The morphism $[W/G] \rightarrow \mathrm{Log}_{\CC}$ factors through the morphism $[W/G] \rightarrow [\AAA^r/\GG_m^r]$. Moreover, since $[\AAA^r/\GG_m^r]$ is logarithmically étale, the logarithmic cotangent complex is equal to the cotangent complex of $$[W/G] \rightarrow [\AAA^r/\GG_m^r].$$
\end{remark}

\begin{remark}
    In our setup, in the case where the intersection of all of the divisor components is non-empty, then $[\AAA^r/\GG_m^r]$ is nothing more than the artin fan of $X$ \cite{abramovich2018birational}. In general, in our setup, there is a strict morphism from the artin fan to $[\AAA^r/\GG_m^r]$.
\end{remark}
\begin{lemma}
	There is a morphism in the derived category of $\qlog$ $$\phi : \mathbf{R} \pi_{*}(\mathbf{L}u^* \LL^{\log}_{[W/G]} \otimes \omega_{\pi}) \rightarrow \LL_{\qlog/\frM^{\log}_{g,\alpha}([\AAA^r/\GG_m^r])}.$$
\end{lemma}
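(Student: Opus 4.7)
The plan is to follow the pattern of the Behrend--Fantechi/Kim--Kresch--Pantev construction of relative obstruction theories for moduli of maps, adapted to the logarithmic quasimap setting. I would use the universal map $u:\cC_{\cQ}\to[W/G]$ to pull back the log cotangent complex of $[W/G]$ to $\cC_{\cQ}$, identify the resulting arrow with one landing in a pullback of $\LL_{\qlog/\frM^{\log}_{g,\alpha}([\AAA^r/\GG_m^r])}$ via base change in the cartesian square of universal curves, and finally transfer the result down to $\qlog$ using Grothendieck--Verdier duality for the proper flat family of prestable curves $\pi:\cC_{\cQ}\to\qlog$.

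First, I would construct a morphism $\mathbf{L}u^{*}\LL^{\log}_{[W/G]}\to \LL_{\cC_{\cQ}/\cC_{\frM}}$ on $\cC_{\cQ}$. The preceding remark identifies $\LL^{\log}_{[W/G]}$ with the ordinary cotangent complex of $q:[W/G]\to[\AAA^r/\GG_m^r]$. The transitivity triangle for the composition $\cC_{\cQ}\xrightarrow{u}[W/G]\xrightarrow{q}[\AAA^r/\GG_m^r]$ supplies a natural arrow $\mathbf{L}u^{*}\LL_{[W/G]/[\AAA^r/\GG_m^r]}\to \LL_{\cC_{\cQ}/[\AAA^r/\GG_m^r]}$, and because $q\circ u=\bar u\circ f$ (the two outer compositions in the diagram agree), the factorisation $\cC_{\cQ}\xrightarrow{f}\cC_{\frM}\xrightarrow{\bar u}[\AAA^r/\GG_m^r]$ provides a further transitivity arrow $\LL_{\cC_{\cQ}/[\AAA^r/\GG_m^r]}\to \LL_{\cC_{\cQ}/\cC_{\frM}}$. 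Composing gives the map I want.

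Next, I would convert this to a statement about $\LL_{\qlog/\frM^{\log}_{g,\alpha}([\AAA^r/\GG_m^r])}$. Because the square of universal curves in the excerpt is cartesian and $\pi_{\frM}:\cC_{\frM}\to\frM^{\log}_{g,\alpha}([\AAA^r/\GG_m^r])$ is a flat family of prestable curves, base change yields $\LL_{\cC_{\cQ}/\cC_{\frM}}\simeq \pi^{*}\LL_{\qlog/\frM^{\log}_{g,\alpha}([\AAA^r/\GG_m^r])}$. Composing with the previous step produces an arrow $\mathbf{L}u^{*}\LL^{\log}_{[W/G]}\to \pi^{*}\LL_{\qlog/\frM^{\log}_{g,\alpha}([\AAA^r/\GG_m^r])}$ on $\cC_{\cQ}$. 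Finally, tensoring with $\omega_{\pi}$, applying $\mathbf{R}\pi_{*}$, and combining the projection formula with the trace morphism of the proper flat Gorenstein family $\pi$ --- equivalently, the counit of Grothendieck--Verdier duality with $\omega_{\pi}$ playing the role of $\pi^{!}\cO_{\qlog}$ up to the standard shift convention --- produces the desired morphism.

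The main technical care point is justifying the base-change and projection-formula isomorphisms when the target $[W/G]$ is an Artin quotient stack rather than a scheme or Deligne--Mumford stack; this is exactly where the standing flatness hypothesis on $q$ enters, since it is what secures the base-change isomorphism for $\LL_{[W/G]/[\AAA^r/\GG_m^r]}$ needed in Step~1 and keeps the cartesian square of universal curves tor-independent. The log-to-ordinary identification $\LL^{\log}_{[W/G]}=\LL_{[W/G]/[\AAA^r/\GG_m^r]}$ invoked throughout is the content of the preceding remark and uses the log-\'etaleness of $[\AAA^r/\GG_m^r]$; once these points are secured, the construction is formal and functorial.
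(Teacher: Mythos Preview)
Your proposal is correct and follows essentially the same three-step pattern as the paper: functoriality to produce $\mathbf{L}u^*\LL^{\log}_{[W/G]}\to\LL_{\cC_{\cQ}/\cC_{\frM}}$, base change along the cartesian square of universal curves to identify $\LL_{\cC_{\cQ}/\cC_{\frM}}\cong\mathbf{L}\pi^*\LL_{\cQ/\frM}$, and then tensoring with $\omega_\pi$ followed by $\mathbf{R}\pi_*$ and the adjunction (your Grothendieck--Verdier phrasing is just a more explicit name for what the paper calls ``adjunction''). One small correction: the flatness hypothesis on $q:[W/G]\to[\AAA^r/\GG_m^r]$ is not needed for this lemma---the base change in Step~2 uses flatness of $\pi_{\frM}$, which you already note, and Step~1 is pure functoriality with no base change required; the paper invokes the flatness of $q$ only later, in the proof that $\phi$ is an obstruction theory.
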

\begin{proof}
	By the functoriality properties of the cotangent complex ~\cite[Theorem-Definition 17.3 (2)]{gerard00champs}, we get a morphism in the derived category $$\mathbf{L}u^* \LL^{\log}_{[W/G]} \rightarrow \LL_{\cC_{\cQ}/\cC_{\frM}}.$$
	On the other hand, base change properties of the cotangent complex ~\cite[Theorem-Definition 17.3 (4)]{gerard00champs} imply that $$\LL_{\cC_{\cQ}/\cC_{\frM}} \cong \mathbf{L}\pi^* \LL_{\cQ/\frM}$$
	where $\cQ = \qlog$ and $\frM= \frM^{\log}_{g,\alpha}(\agm)$.
	Tensoring this morphism with the relative dualising sheaf gives $$\mathbf{L}u^* \LL^{\log}_{[W/G]} \otimes \omega_{\pi} \rightarrow \mathbf{L}\pi^*\LL_{\cQ/\frM} \otimes \omega_{\pi}$$
	so, by applying $\mathbf{R}\pi_{*}$ and using adjunction, we get a morphism in the derived category 
	$$\phi: \mathbf{R} \pi_{*}(\mathbf{L}u^* \LL^{\log}_{[W/G]} \otimes \omega_{\pi}) \rightarrow \LL_{\cQ/\frM}.\qedhere$$
\end{proof}

Now we need to show two things:

\begin{enumerate}
	\item $\phi$ defines an obstruction theory.
	\item $\cE^{\bullet} := \mathbf{R} \pi_{*}(\mathbf{L}u^* \LL^{\log}_{[W/G]} \otimes \omega_{\pi})$ is of perfect amplitude contained in $[-1,0]$.
\end{enumerate}

\begin{proposition}\label{obstructiontheory}
	The morphism $\phi$ defines an obstruction theory.
\end{proposition}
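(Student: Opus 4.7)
The plan is to verify the Behrend--Fantechi criterion for $\phi$ deformation-theoretically: for every square-zero extension $i: T \hookrightarrow T'$ of affine schemes over $\frM = \frM_{g,\alpha}^{\log}(\argmr)$ with ideal $\mathcal{J}$, and every $T$-point of $\cQ = \qlog$, isomorphism classes of lifts to $T'$ should be a torsor under the group predicted by $\cE^\bullet$ via $\phi$, and the obstruction to existence of a lift should lie in the corresponding degree-one group, compatibly with the universal deformation theory of $\LL_{\cQ/\frM}$.

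First I would use the preceding remark: since $\argmr$ is logarithmically \'etale over $\Spec \CC$, the logarithmic cotangent complex $\LL^{\log}_{[W/G]}$ coincides with the ordinary cotangent complex of the strict morphism $[W/G] \to \argmr$. Strictness of this morphism, together with the fact that the log structures on $\cC_\cQ$ and on $\cQ$ are pulled back from $\frM$ (and thus already fixed once the $\frM$-point is fixed), means that enhancing the $T$-point of $\cQ$ to a $T'$-point relative to $\frM$ is equivalent to lifting the underlying morphism of stacks $\cC_T \to [W/G]$ to $\cC_{T'} \to [W/G]$ while fixing its composition with $[W/G] \to \argmr$. The logarithmic deformation problem for $\cQ$ over $\frM$ therefore reduces to an ordinary deformation problem for morphisms to an algebraic stack, in the relative setting over $\argmr$.

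Next I would apply the standard deformation theory of Illusie/Olsson for morphisms to an algebraic stack, using the flatness assumption on $[W/G] \to \argmr$ to ensure that $\mathbf{L}u^*$ commutes with the relevant base changes: deformations and obstructions for the lift are classified respectively by $\Ext^0$ and $\Ext^1$ on $\cC_T$ of $\mathbf{L}u^*\LL^{\log}_{[W/G]}$ with coefficients in $\pi^*\mathcal{J}$. Relative Grothendieck--Serre duality for the family of prestable logarithmic curves $\pi: \cC_\cQ \to \cQ$, a flat proper Gorenstein morphism with dualising sheaf $\omega_\pi$, converts these into the $\Ext$ groups on $T$ of $\cE^\bullet = \mathbf{R} \pi_*(\mathbf{L}u^*\LL^{\log}_{[W/G]} \otimes \omega_\pi)$ against $\mathcal{J}$, by the usual adjunction that produced $\phi$.

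The main obstacle is bookkeeping: one must check that the identification of $\Ext$-groups with log deformations/obstructions of the $\cQ$-point agrees, under the comparison isomorphism induced by duality and adjunction, with the map on cohomology of $\phi^\vee$ into the groups prescribed by $\LL_{\cQ/\frM}$. Concretely this is an exercise in the naturality of Illusie's construction and of relative duality, but it is where one must be most careful, since the argument must reproduce the specific morphism $\phi$ and not merely some abstract quasi-isomorphism. A secondary subtlety, already flagged in the excerpt, is the Artin-stack target $[W/G]$; flatness of $[W/G] \to \argmr$ is exactly what is needed to apply Olsson's formalism and the projection formula smoothly at this step.
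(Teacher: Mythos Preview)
Your proposal is correct and follows essentially the same route as the paper: verify the Behrend--Fantechi criterion by identifying lifts of a $T$-point over a square-zero extension with logarithmic extensions of the universal map $u_T$ to $\cC_{T'}$, control these by Olsson's theory via $\mathbf{L}u^*\LL^{\log}_{[W/G]}$, and then convert the resulting $\mathrm{Ext}$-groups on the curve into $\mathrm{Ext}$-groups of $\cE^\bullet$ on the base using relative duality for $\pi$. The paper phrases the middle step slightly differently---it invokes Olsson's Theorem~5.9 for the log cotangent complex directly rather than first reducing, via strictness of $[W/G]\to\argmr$, to an ordinary deformation problem---but since $\LL^{\log}_{[W/G]}$ equals $\LL_{[W/G]/\argmr}$ (the Remark preceding the proposition), the two formulations are interchangeable, and your explicit bookkeeping concern about matching $\phi$ with the obstruction class is exactly the content of the final chain of $\mathrm{Ext}$-identifications in the paper's proof.
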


\begin{proof}
	This argument follows exactly as in ~\cite[Proposition 5.1]{gross2013logarithmic}. Let $T \hookrightarrow \bar{T}$ be a square zero extension with ideal $J$ and let $h : T \rightarrow \cQ$ be a morphism. Endow $T$ and $\bar{T}$ with logarithmic structures pulled back from $\frM$ and pullback the universal curve to $T$ and $\bar{T}$, which we will denote $\cC_T$ and $\cC_{\bar{T}}$. We have a commutative diagram
	
	\begin{equation}
\begin{tikzcd}
                             & \cC_T \arrow[rr, "\tilde{h}"] \arrow[ld, "p"] \arrow[dd] \arrow[rrrr, "u_T", bend left] &                & \cC_{\cQ} \arrow[dd] \arrow[ld, "\pi"] \arrow[rr, "u"] &  & {[W/G]} \arrow[dd] \\
T \arrow[rr, "\quad h"] \arrow[dd] &                                                                                         & \cQ \arrow[dd] &                                                        &  &                    \\
                             & \cC_{\overline{T}} \arrow[ld] \arrow[rr]                                                &                & \cC_{\frM} \arrow[ld] \arrow[rr]                             &  & {[\AAA^r/\GG_m^r]} \\
\overline{T} \arrow[rr]      &                                                                                         & \frM           &                                                        &  &                   
\end{tikzcd}.
	\end{equation}
	
	Recall from ~\cite[III 2.2.4]{illusie71complexe} and ~\cite[2.21]{olsson2006deformation} that $h$ extends to a morphism $\bar{T} \rightarrow \cQ$ if and only if $\omega(h) \in \mathrm{Ext}^1(\mathbf{L}h^* \LL_{\cQ/\frM},J)$ is $0$ where $\omega(h)$ is defined by $$\mathbf{L}h^*\LL_{\cQ/\frM} \rightarrow \LL_{T/\bar{T}} \rightarrow \tau_{\geq -1}\LL_{T/\bar{T}} = J[1]$$
	To show that $\phi$ defines an obstruction theory we use ~\cite[Proposition 4.53]{behrend1997intrinsic}. We show that an extension exists if and only if $\phi^*\omega(h) = 0$ and moreover if an extension exists, the set of isomorphism classes of extensions form a torsor under $\Hom(\mathbf{L}h^* \cE^{\bullet},J)$.
	As in ~\cite[Proposition 5.1]{gross2013logarithmic}, a lift exists if and only if $u_T$ extends logarithmically to $\cC_{\bar{T}}$. But using ~\cite[Theorem 5.9]{olsson2005logarithmic} there is a class $o \in \mathrm{Ext}^1(\mathbf{L}u_{T}^*\LL^{\log}_{[W/G]},p^*J)$ which vanishes if and only if there is a lift. Moreover, the lifts form a torsor under $\Hom(\mathbf{L}u_{T}^*\LL^{\log}_{[W/G]},p^*J)$. However, just as in \cite{gross2013logarithmic} we have 
	\begin{align*}
	&\mathrm{Ext}^k(\mathbf{L}h^*\mathbf{R}\pi_{*}(\mathbf{L}u^*\LL^{\log}_{[W/G]}\otimes \omega_{\pi}),J)& \\
	= \, &\mathrm{Ext}^k(\mathbf{L}u^*\LL^{\log}_{[W/G]}\otimes \omega_{\pi},  \mathbf{L}\pi^!\mathbf{R}h_{*}J) \\
	= \, &\mathrm{Ext}^k(\mathbf{L}u^*\LL^{\log}_{[W/G]}, \mathbf{R}\tilde{h}_{*}p^*J)	\\
	= \, &\mathrm{Ext}^k(\mathbf{L}u_{T}^*\LL^{\log}_{[W/G]}, p^*J)
	\end{align*}
	which, when $k=1$, sends $\phi^*\omega(h)$ to the obstruction class $o$.
\end{proof}

\begin{remark}\label{liftinggeneralisation}
	We have used results from \cite{olsson2005logarithmic}, in particular ~\cite[Axiom 1.1 (ii), (iv), Theorem 5.9]{olsson2005logarithmic}, which are proved in the context of logarithmic schemes. We require these results to extend to certain algebraic stacks. The same proofs go through without change, because the results follow from the corresponding properties of the ordinary cotangent complex, which hold for such algebraic stacks.
 An alternative strategy would have been to show that the morphism $$\cQ_{g,n}(X,\beta) \rightarrow \frM_{g,n}([\AAA^r/\GG_m^r])$$ admits an obstruction theory as in \cite[Proposition 3.1.1]{abramovich2014comparison}, which would have allowed us to not invoke the deformation theory of logarithmic maps.
\end{remark}
\subsection{Perfectness of the obstruction theory}

\begin{proposition}\label{perfect}
	$\mathbf{R} \pi_{*}(\mathbf{L}u^* \LL^{\log}_{[W/G]} \otimes \omega_{\pi})$ is of perfect amplitude contained in $[-1,0]$.
\end{proposition}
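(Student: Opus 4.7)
My approach is to split the argument into three stages: perfection of the logarithmic cotangent complex on the target, its transport onto the universal curve, and control of the derived pushforward to $\qlog$.

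First, I would show that $\LL^{\log}_{[W/G]}$ is itself perfect of amplitude $[-1,0]$ on $[W/G]$. By the remark preceding the lemma this coincides with the ordinary relative cotangent complex of the flat morphism $[W/G] \to [\AAA^r/\GG_m^r]$ (flatness is the standing assumption of this section). Factoring through $[V/G]$, the closed immersion $[W/G] \hookrightarrow [V/G]$ is l.c.i.\ by the hypothesis on $W$, so it contributes the conormal bundle $I/I^2$ in cohomological degree $-1$; the morphism $[V/G] \to [\AAA^r/\GG_m^r]$ is a smooth morphism of smooth Artin stacks and contributes a vector bundle in degree $0$. The transitivity triangle then yields a local two-term model $[E^{-1} \to E^0]$ of vector bundles for $\LL^{\log}_{[W/G]}$.

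Next, derived pullback by $u$ and tensor with the invertible $\omega_\pi$ both preserve perfect amplitude $[-1,0]$, so $\mathbf{L}u^*\LL^{\log}_{[W/G]} \otimes \omega_\pi$ is locally modeled by a two-term complex of vector bundles $[u^*E^{-1} \otimes \omega_\pi \to u^*E^0 \otimes \omega_\pi]$ on $\cC_{\cQ}$.

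Finally, I would apply $\mathbf{R}\pi_*$. Since $\pi$ is a proper flat family of prestable curves, $R^i\pi_* F = 0$ for all coherent $F$ and all $i \geq 2$, and $\mathbf{R}\pi_*$ of a single vector bundle is a perfect complex of amplitude $[0,1]$. Assembling via the spectral sequence for the associated double complex gives $\mathbf{R}\pi_*(\mathbf{L}u^*\LL^{\log}_{[W/G]} \otimes \omega_\pi)$ as a perfect complex of amplitude contained in $[-1,1]$ a priori. The sharpening to $[-1,0]$ requires the vanishing of the top cohomology, which by relative Serre duality on the curve translates into a $\pi_*$-vanishing (or surjectivity) statement for the dual. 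Here one exploits that $W \hookrightarrow V$ sits inside a vector space, so that the summands of $E^0$ (coming from $\Omega^1_V(\log D_V)$) and of $E^{-1}$ (coming from $\mathfrak{g}^\vee$ and the conormal $I/I^2$) are pulled back from $V$; the resulting global-generation forces the necessary higher cohomology to disappear, exactly as in the non-logarithmic quasimap perfection argument of Ciocan-Fontanine--Kim--Maulik.

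\textbf{Main obstacle.} The hard part is precisely this last step. The naive cohomology-and-base-change bound only yields amplitude contained in $[-1,1]$, and shaving off the top degree requires both the vector-space embedding $W \hookrightarrow V$ and the fact that $D$ is pulled back from $[\AAA^r/\GG_m^r]$. Carrying the logarithmic data through the Serre-duality argument, while keeping track of the contribution of the log boundary, is where the technical work lies.
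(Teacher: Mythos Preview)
Your three-stage strategy matches the paper's, and you correctly flag the last step as the crux. Two points:

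\emph{Minor.} The morphism $[V/G]\to[\AAA^r/\GG_m^r]$ need not be smooth: it is determined by the equivariant sections $s_1,\dots,s_r$, which can have degenerate differentials. The paper only claims that $\TT^{\log}_{[V/G]}$ has cohomology in $[0,1]$ (equivalently $\LL^{\log}_{[V/G]}$ in $[-1,0]$), obtained from the mapping-cone description and the injectivity of the action differential $\mathfrak{g}\otimes\cO_V\to\cO_V^{\dim V}$. Your conclusion for stage one is still right, but the smoothness shortcut is not.

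\emph{Main gap.} The mechanism you invoke for the sharpening from $[-1,1]$ to $[-1,0]$ is not the one that works, and is not the one in \cite{ciocan2014stable}. ``Global generation'' of the pieces pulled back from $V$ does not by itself kill the offending cohomology. The paper dualises via $\mathbf{R}\pi_*(\mathbf{L}u^*\LL^{\log}_{[W/G]}\otimes\omega_\pi)\cong(\mathbf{R}\pi_*\mathbf{L}u^*\TT^{\log}_{[W/G]})^{\vee}$ and then shows $\mathbf{R}\pi_*\mathbf{L}u^*\TT^{\log}_{[W/G]}$ has amplitude $[0,1]$; the vanishing of $R^2$ on a fibre $C$ comes from the hypercohomology spectral sequence $E_2^{p,q}=H^p(C,H^q(F^\bullet))\Rightarrow \mathbf{R}^{p+q}\Gamma(F^\bullet)$ together with the observation that $H^1(F^\bullet)$ is a \emph{torsion} sheaf. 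The reason is purely quasimap-theoretic: away from the finite basepoint locus $B$ the morphism factors through $X=W^s/G$, where $\TT^{\log}$ is the honest sheaf $\cT^{\log}_X$, so $H^1(F^\bullet)$ is supported on $B$. Hence $H^1(C,H^1(F^\bullet))=0$ and $R^2\Gamma(F^\bullet)=0$. This is exactly the argument of \cite[Theorem~4.5.2]{ciocan2014stable}, transported to the logarithmic setting; the log boundary plays no special role beyond ensuring $\TT^{\log}$ is a sheaf over the semistable locus. Replace your ``global generation'' paragraph with this torsion-sheaf argument and the proof goes through.
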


	\begin{definition}
	Define the \emph{logarithmic tangent complex} $\TT^{\log}_{[W/G]}$ as the derived dual of $\LL^{\log}_{[W/G]}$.
\end{definition}

\begin{lemma}\label{Lemma : cohomology of Tlog}
	The logarithmic tangent complex $\TT^{\log}_{[W/G]}$ has cohomology supported in $[0,1]$.
\end{lemma}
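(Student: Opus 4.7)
The plan is to show that $\LL^{\log}_{[W/G]}$ is a perfect complex of amplitude $[-1,0]$, from which the statement for $\TT^{\log}_{[W/G]}$ follows immediately by dualizing. By the preceding remark, $\LL^{\log}_{[W/G]}$ agrees with the ordinary relative cotangent complex of the flat morphism $f \colon [W/G] \to [\AAA^r/\GG_m^r]$, and cohomological amplitude on an algebraic stack is detected after passing to a smooth cover, so I would verify the bound after pulling back along the smooth presentation $\pi \colon W \to [W/G]$.

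Next I would apply transitivity for the tower $W \to [W/G] \to [\AAA^r/\GG_m^r]$, giving the triangle
$$\pi^* \LL^{\log}_{[W/G]} \to \LL_{W/[\AAA^r/\GG_m^r]} \to \LL_{W/[W/G]},$$
whose last term is a vector bundle $\mathfrak{g}^\vee \otimes \mathcal{O}_W$ concentrated in degree $0$ by smoothness of $\pi$. The amplitude question thus reduces to bounding the middle term, for which I would apply transitivity once more to $W \to [\AAA^r/\GG_m^r]$: the absolute complex $\LL_W$ is in degrees $[-1,0]$ by the l.c.i.\ hypothesis, and $\LL_{[\AAA^r/\GG_m^r]}$ admits an explicit two-term presentation on the toric atlas $\AAA^r \to [\AAA^r/\GG_m^r]$ coming from the infinitesimal $\GG_m^r$-action on coordinates. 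Flatness of $f$, and hence of the composition $W \to [\AAA^r/\GG_m^r]$, guarantees that pullback preserves cohomological amplitude, pinning $\LL_{W/[\AAA^r/\GG_m^r]}$ to amplitude $[-1,0]$.

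Dualizing the first triangle then presents $\pi^* \TT^{\log}_{[W/G]}$ as a two-term complex $[\mathfrak{g} \otimes \mathcal{O}_W \to \TT_{W/[\AAA^r/\GG_m^r]}]$ concentrated in degrees $[0,1]$, and smooth descent along $\pi$ transports this bound to $\TT^{\log}_{[W/G]}$ itself. The main technical obstacle is verifying that no $h^1$-contribution of $\pi^*\LL^{\log}_{[W/G]}$ appears from the first fiber sequence, equivalently that the natural map $\Omega_{W/[\AAA^r/\GG_m^r]} \to \mathfrak{g}^\vee \otimes \mathcal{O}_W$ is surjective; this surjectivity is supplied by the smoothness of $\pi$, and is the precise point at which the flatness hypothesis on $f$ does its essential work, distinguishing the logarithmic situation from the purely group-theoretic one.
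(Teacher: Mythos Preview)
Your plan contains a genuine gap. The claim that $\LL^{\log}_{[W/G]}$ has cohomological amplitude $[-1,0]$ is strictly stronger than the lemma and is in fact false under the paper's hypotheses. The crux is your asserted surjectivity of $\Omega_{W/[\AAA^r/\GG_m^r]} \to \mathfrak{g}^\vee \otimes \cO_W$. For a tower $X \to Y \to Z$ with $Y$ a \emph{scheme} (or DM stack) the map $\Omega_{X/Z} \to \Omega_{X/Y}$ is always surjective, but here $Y = [W/G]$ is a genuine Artin stack with $H^1(\LL_{[W/G]}) \neq 0$, and the long exact sequence only gives surjectivity up to a contribution from $H^1(\pi^*\LL^{\log}_{[W/G]})$---precisely the term you want to kill. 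Neither smoothness of $\pi$ nor flatness of $f$ supplies this. Concretely, take $V = \AAA^4$, $G = \GG_m^2$ with weights $(1,0),(1,0),(0,1),(0,1)$ (so $X = \PP^1\times\PP^1$) and $D$ the diagonal, cut out by $s_D = x_0x_3 - x_1x_2$. The map $\Omega_{\AAA^4/[\AAA^1/\GG_m]} \to \cO^2$ is readily computed and fails to be surjective at the origin: its image there is the diagonal $\{(c,c)\}\subset\CC^2$. Hence $H^1(\pi^*\LL^{\log}_{[\AAA^4/\GG_m^2]}) \neq 0$.

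The paper sidesteps this by working on the tangent side from the outset and using a genuinely different, and weaker, input: the infinitesimal action map $\mathfrak{g}\otimes\cO_V \to T_V$ is \emph{injective} as a map of sheaves (generic freeness of the action). Injectivity of a sheaf map does not dualize to surjectivity, so this does not give what you need. But it does give $H^{-1}(\TT_{[V/G]}) = 0$, which via the triangle $\TT^{\log}_{[V/G]} \to \TT_{[V/G]} \to S^*\TT_{[\AAA^r/\GG_m^r]}$ forces $H^{-1}(\TT^{\log}_{[V/G]}) = 0$; one then bootstraps from $V$ to $W$ using the l.c.i.\ embedding. The upshot is that $\TT^{\log}$ is cohomologically in $[0,1]$ even though $\LL^{\log}$ may have torsion $H^1$; the two statements are \emph{not} dual to each other in the naive sense. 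If you want to salvage your cover-and-transitivity approach, you must argue on the tangent side and invoke the injectivity of the action map rather than any surjectivity of differentials.
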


\begin{proof}
We first assume prove the lemma in the case where $W=V$ is a vector space. In this situation, the divisor $D$ comes from a morphism $[V/G] \rightarrow [\AAA^r/\GG_m^r]$ defined by sections $s_1,\dots, s_r$ of the trivial line bundle
\begin{align*}
S:[V/G] &\rightarrow [\AAA^r/\GG_m^r] \\
\underline{v} &\mapsto \left(s_1(\underline{v}),\dots,s_{r}(\underline{v})\right).
\end{align*}
There is a distinguished triangle in the derived category of $[V/G]$ $$\mathbf{L}S^*\LL_{[\AAA^r/\GG_m^r]} \rightarrow \LL_{[V/G]} \rightarrow \LL^{\log}_{[V/G]} \rightarrow \mathbf{L}S^*\LL_{[\AAA^r/\GG_m^r]}[1].$$

By \cite{behrend2005derham} the \emph{tangent complex} of $[V/G]$ is given in degrees $[-1,0]$ by the differential of the action.
\begin{align}\label{tangentcomplexVG}
\mathfrak{g} \otimes \cO_V &\rightarrow \cO_{V}^{\oplus \dim V}
\end{align}

	
	

The same reasoning tells us that the tangent complex of $[\AAA^r/\GG_m^r]$ is given in degrees $[-1,0]$ by 
\begin{align*}
\cO_{\AAA^r}^{\oplus r} &\rightarrow \cO_{\AAA^r}^{\oplus r} \\
e_i &\mapsto (0, \dots, 0,x_i,0, \dots,0).
\end{align*}

Pulling this back to $[V/G]$ gives the complex
\begin{align*}
\cO_{V}^{\oplus r} &\rightarrow \cO_{V}^{\oplus r} \\
e_i &\mapsto (0,\dots, s_i(\underline{v}) ,\dots,0).
\end{align*}

Taking the dual distinguished triangle	
	\begin{equation}
	\TT_{[V/G]}^{\log} \rightarrow \TT_{[V/G]} \rightarrow \mathbf{L}S^*\TT_{[\AAA^r/\GG_m^r]} \rightarrow \TT^{\log}_{[V/G]}[1]
	\end{equation}
	tells us that we can build $\TT^{\log}_{[V/G]}$ as the shift of the mapping cone of $\TT_{[V/G]} \rightarrow \mathbf{L}S^*\TT_{[\AAA^r/\GG_m^r]}$. Since the morphism \eqref{tangentcomplexVG} is injective it follows that the mapping cone actually has cohomology supported in $[-1,0]$ and so after shifting, $\TT^{\log}_{[V/G]}$ has cohomology supported in $[0,1]$. This concludes the proof of the lemma when $W=V$. Now we want to show that if we have $W \hookrightarrow V$ then $\TT_{[W/G]}^{\log}$ also has cohomology supported in $[0,1]$. Since we are only considering divisors pulled back from $[V/G]$ we have morphisms 
\begin{equation}
	\begin{tikzcd}
	{[W/G]} \arrow[r, "i"] \arrow[rr, "S \circ i", bend left] & {[V/G]} \arrow[r, "S"] & {[\AAA^r/\GG_m^r]}.
	\end{tikzcd}
\end{equation}
	The associated (dual) distinguished triangle is
	\begin{equation}
		\TT_{[W/G]/[V/G]} \rightarrow \TT^{\log}_{[W/G]} \rightarrow \mathbf{L}i^*\TT_{[V/G]}^{\log} \rightarrow \TT_{[W/G]/[V/G]}[1]
	\end{equation}
	
	But by the cartesian diagram 
\begin{equation*}
	\begin{tikzcd}
	W \arrow[d] \arrow[r]   & V \arrow[d] \\
	{[W/G]} \arrow[r, "i"'] & {[V/G]}    
	\end{tikzcd}
\end{equation*}
together with the fact that $W$ has only l.c.i. singularities (containined in $W\setminus W^s$) and $V \rightarrow [V/G]$ is flat, we know that $\TT_{[W/G]/[V/G]}$ has cohomology supported in degree $1$ ~\cite[Section 4.5]{ciocan2014stable}. The distinguished triangle tells us we can build $\TT^{\log}_{[W/G]}$ as the mapping cone of the morphism $\mathbf{L}i^*\TT_{[V/G]}^{\log}[-1] \rightarrow \TT_{[W/G]/[V/G]}$ which by the result above must have cohomology supported in $[0,1]$.
\end{proof}

We can also conclude that $\mathbf{L}u^* \TT^{\log}_{[W/G]}$ also has cohomology supported in $[0,1]$. Furthermore, we know that $\mathbf{R} \pi_{*}(\mathbf{L}u^* \LL^{\log}_{[W/G]} \otimes \omega_\pi)$ is quasi-isomorphic to $\left(\mathbf{R} \pi_{*}(\mathbf{L}u^* \TT^{\log}_{[W/G]})\right)^{\vee}$, see for example ~\cite[4.1]{fausk2003isomorphisms}.

\begin{proof}[Proof of Proposition \ref{perfect}]
	We need to show that $\mathbf{R} \pi_{*}(\mathbf{L}u^* \TT^{\log}_{[W/G]})$ is of perfect amplitude contained in $[0,1]$. First we show that $\mathbf{R} \pi_{*}(\mathbf{L}u^* \TT^{\log}_{[W/G]})$ has cohomology supported in $[0,1]$. This will follow from the argument of ~\cite[Theorem 4.5.2]{ciocan2014stable}. Let $C$ be a geometric fibre of $\cC_{\cQ}$, corresponding to a geometric point $i_p : \Spec \CC \hookrightarrow \cQ^{\log}_{g,\alpha}(X|D,\beta)$. Let $F^{\bullet}$ denote the restriction of $\mathbf{L}u^* \TT^{\log}_{[W/G]}$ to $C$ so that $$\mathbf{L}i_{p}^{*}\mathbf{R} \pi_{*}\left(\mathbf{L}u^* \TT^{\log}_{[W/G]}\right) \cong \mathbf{R}\Gamma (F^{\bullet})$$ in the derived category. We show that $\mathbf{R}^2\Gamma(F^{\bullet})=0$. We have that $H^1(F^{\bullet})$ is a torsion sheaf. This follows from the fact that away from the set $B$ of finitely many basepoints on $C$ the morphism $u$ factors through $X$, and the restriction $F^{\bullet}|_{C\setminus B}$ is quasi-isomorphic to the vector bundle $\cT^{\log}_X$. We have the spectral sequence $$E^{p,q}_{2} = \mathbf{R}^p \Gamma(H^q(F^{\bullet})) \Rightarrow \mathbf{R}^{p+q}\Gamma(F^{\bullet}) = E^{p+q}.$$
	The second page looks like
	\begin{equation*}
	\begin{tikzcd}
	{H^1(C,H^0(F^{\bullet}))} & {H^1(C,H^1(F^{\bullet}))} \\
	{H^0(C,H^0(F^{\bullet}))} & {H^0(C,H^1(F^{\bullet}))}
	\end{tikzcd}
	\end{equation*}
	but by the above we know that $H^1(C,H^1(F^{\bullet}))=0$. Consequently $R^2\Gamma(F^{\bullet}) = 0$. 
	
	Using the criterion of  ~\cite[3.6.4]{illusie2005grothendieck}, the complex $\mathbf{R} \pi_{*}(\mathbf{L}u^* \TT^{\log}_{[W/G]})$, which is cohomologically supported in $[0, 1]$, is of perfect amplitude contained in $[0, 1]$ if and only if for every point $p$ we have $$H^{-1}\left(\mathbf{L}i_{p}^{*}\mathbf{R} \pi_{*}\left(\mathbf{L}u^* \TT^{\log}_{[W/G]}\right)\right) = H^{-1}(\mathbf{R}\Gamma(F^{\bullet})) = 0$$
	But this is also clear by the above.
\end{proof}

\begin{theorem}\label{logqvir}
	There is a relative perfect obstruction theory on $\qlog$ over $\frM^{\log}_{g,\alpha}([\AAA^r/\GG_m^r])$ leading to a virtual fundamental class $[\qlog]^{\vir}$.
\end{theorem}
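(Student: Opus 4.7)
The plan is to assemble the theorem as a direct consequence of the two preceding propositions together with the Behrend--Fantechi machinery. Concretely, the morphism
\[
\phi: \mathbf{R}\pi_*\bigl(\mathbf{L}u^*\LL^{\log}_{[W/G]} \otimes \omega_\pi\bigr) \longrightarrow \LL_{\qlog/\frM^{\log}_{g,\alpha}([\AAA^r/\GG_m^r])}
\]
was constructed in the lemma preceding Proposition \ref{obstructiontheory}, and the two hypotheses needed by Behrend--Fantechi have already been verified: Proposition \ref{obstructiontheory} says $\phi$ is a relative obstruction theory, and Proposition \ref{perfect} says its source $\cE^\bullet := \mathbf{R}\pi_*(\mathbf{L}u^*\LL^{\log}_{[W/G]} \otimes \omega_\pi)$ is of perfect amplitude contained in $[-1,0]$. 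Together these are exactly the input data for a \emph{relative perfect obstruction theory}.

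Having assembled this, I would invoke the standard construction of \cite{behrend1997intrinsic}: a relative perfect obstruction theory $\phi: \cE^\bullet \to \LL_{\cQ/\frM}$ over a pure-dimensional base (here $\frM^{\log}_{g,\alpha}([\AAA^r/\GG_m^r])$, which is logarithmically smooth by the remark following Definition \ref{mlog}) determines an intrinsic normal cone embedded in the vector bundle stack $h^1/h^0((\cE^\bullet)^\vee)$, and intersection with the zero section of this bundle stack yields the virtual fundamental class $[\qlog]^{\vir}$ of the expected dimension. Since $\qlog$ is a Deligne--Mumford stack of finite type (shown earlier) and is proper, the class lies in the Chow group of a proper DM stack in the expected degree, so numerical invariants can be extracted by integration.

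The substantive work has already been carried out in the propositions, so the only real content here is the verification that the two hypotheses assemble into the Behrend--Fantechi framework as stated. The step I would think through carefully is that the base $\frM^{\log}_{g,\alpha}([\AAA^r/\GG_m^r])$, although a logarithmic algebraic stack, serves as a legitimate base for the relative construction: this is standard once one works with Olsson's stack $\mathrm{Log}$, since the relative obstruction theory over the logarithmic stack is really the ordinary obstruction theory over the underlying algebraic stack, matching the conventions of Remark \ref{liftinggeneralisation}. With this identification the Behrend--Fantechi construction applies verbatim, yielding the virtual class.
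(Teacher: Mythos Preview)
Your proposal is correct and takes essentially the same approach as the paper: the paper's proof is the one-line statement that combining Proposition~\ref{obstructiontheory} with Proposition~\ref{perfect} yields the virtual class via \cite{behrend1997intrinsic}. Your version simply spells out in more detail how these two ingredients slot into the Behrend--Fantechi framework and adds the (correct) remark that logarithmic smoothness of the base $\frM^{\log}_{g,\alpha}([\AAA^r/\GG_m^r])$ is what makes the relative construction go through.
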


\begin{proof}
 Combining Proposition \ref{obstructiontheory} with Proposition \ref{perfect}, and since $\frM^{\log}_{g,\alpha}([\AAA^r/\GG_m^r])$ is irreducible, we get a virtual fundamental class on $\qlog$ by \cite{manolache12virtual}.  
\end{proof}

\begin{remark}
    In the proof of Proposition \ref{perfect}, we have crucially used the assumption that the divisor is pulled back from $V \GIT G$. There is an analogy with the proof of perfectness of the obstruction theory in ~\cite[Section 4.5]{ciocan2014stable} in the case when $W$ has l.c.i. singularities, where the embedding $W \hookrightarrow V$ is also used.
\end{remark}

\subsection{Toric Divisors}\label{Section : Toric Divisiors}

On the other hand Proposition \ref{perfect} becomes significantly simpler in the case where $X$ is a toric variety and $D = D_1 + \dots + D_r$ is a subset of the toric boundary.

Let $X$ be a smooth projective toric variety associated to some fan $\Sigma$. Recall we have the induced quotient description $X = \AAA^M \GIT \GG_m^s$, see Remark \ref{toric presentation}. The weights of the action can be encoded in a matrix 
\begin{equation*}
	\begin{pmatrix}
	a_{11} & a_{12} & \dots & a_{1M} \\
	a_{21} & a_{22} & \dots & a_{2M} \\
	\vdots & \ddots & \ddots & \vdots \\
	a_{s1} & a_{s2} & \dots    & a_{sM}
	\end{pmatrix}
\end{equation*}

which can be read off from the second morphism in the exact sequence 
\begin{equation}
0 \rightarrow M \rightarrow \ZZ^{|\Sigma(1)|} \rightarrow \Pic(X) \rightarrow 0.
\end{equation}
Here $M$ is the character lattice of the torus. The first morphism is given by the matrix whose rows are the rays of the fan $\{\rho_{i}\}_{i=1}^M$. If $D$ is any smooth divisor then by ~\cite[Proposition 2.1]{cox1995homogeneous} there is an isomorphism 
\begin{equation}\label{coxiso}
H^0(X,\cO_{X}(D)) = \left\langle \prod_{i=1}^M x_{i}^{b_i} : \sum_i b_i [D_{\rho_i}] = [D] \right\rangle
\end{equation}
where $D_{\rho_i}$ is the toric boundary divisor associated to the ray $\rho_i$. Therefore the section $s_D \in H^0(X,\cO_X(D)$ cutting out $D$ has a well defined \emph{degree} 

   \[   \begin{pmatrix}
         \deg_{1}s_D \\
         \deg_{2}s_D \\ 
         \vdots\\ 
        \deg_{s}s_D
     \end{pmatrix}
     =
     \begin{pmatrix}
         a_{11} & a_{12} & \cdots & a_{1M}\\
         a_{21} & a_{22} & \cdots & a_{2M}\\ 
         \vdots & \vdots & \ddots & \vdots\\ 
         a_{s1} & a_{s2} & \cdots & a_{sM} 
     \end{pmatrix}
     \times
     \begin{pmatrix}
         b_{1} \\
         b_{2}\\ 
         \vdots \\ 
         b_{M} 
     \end{pmatrix} \]

for any $(b_1,\dots,b_M)$ appearing as the monomial powers in \eqref{coxiso}. This degree is equivalently given by the grading in the homogeneous coordinate ring of $X$ \cite{cox1995homogeneous}.

\begin{lemma}\label{toricbdyvfc}
	Let $X$ be a smooth projective toric variety associated to some fan $\Sigma$ inducing a quotient description $X = \AAA^M \GIT \GG_m^s$. If $D = D_1 + \dots + D_r$ is a subset of the toric boundary corresponding to rays $\rho_{j_1},\dots,\rho_{j_r} \in \Sigma(1)$ (set $J= \{j_1,\dots,j_r\}$), then there is a \emph{logarithmic Euler sequence} 
	\begin{equation}\label{logeuler}
	0 \rightarrow \cO_{X}^{\oplus s} \rightarrow \bigoplus_{\Sigma(1)\setminus \rho_J} \cO_{X}(D_{\rho}) \bigoplus_{J} \cO_{X} \rightarrow \cT^{\log}_{X} \rightarrow 0
	\end{equation}
	Moreover, in this situation $\TT^{\log}_{[\AAA^M/\GG_m^s]}$ is quasi-isomorphic to a sheaf.
\end{lemma}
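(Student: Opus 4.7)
The strategy is to compute $\TT^{\log}_{[V/G]}$ (writing $V = \AAA^M$ and $G = \GG_m^s$) explicitly as a two-term complex and then descend to $X$ via the free $G$-action on $V^{ss}$. I begin with the distinguished triangle from the preceding lemma,
\[
\TT^{\log}_{[V/G]} \to \TT_{[V/G]} \xrightarrow{\phi} \mathbf{L}S^* \TT_{[\AAA^r/\GG_m^r]},
\]
where $S \colon [V/G] \to [\AAA^r/\GG_m^r]$ is the map classifying the boundary divisor data. In the toric setting $S$ is induced by the coordinate projection $\AAA^M \to \AAA^r$ onto the $J$-indexed coordinates together with the corresponding homomorphism $\GG_m^s \to \GG_m^r$. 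Using Example \ref{toricaction}, both source and target of $\phi$ are explicit two-term complexes of free sheaves; the Lie algebra component of $\phi$ is the linear map $\cO_V^{\oplus s} \to \cO_V^{\oplus r}$ given by the $s \times r$ submatrix $(a_{i, j_k})_{i,k}$ of the weight matrix, while the tangent-direction component is the coordinate projection $\cO_V^{\oplus M} \to \cO_V^{\oplus r}$.

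The crucial observation is that this tangent-direction component is surjective. Consequently the mapping cone of $\phi$, which a priori is a three-term complex, is quasi-isomorphic to a two-term complex whose middle term is the kernel of
\[
\cO_V^{\oplus M} \oplus \cO_V^{\oplus r} \twoheadrightarrow \cO_V^{\oplus r}, \qquad (v, w) \mapsto (v_{j_k} + x_{j_k} w_k)_k.
\]
Parametrising this kernel by $v_\rho$ for $\rho \notin \rho_J$ and by $w_k$ for $k=1,\dots,r$ (the relations forcing $v_{j_k} = -x_{j_k} w_k$) identifies it as a $G$-equivariant $\cO_V$-module with the log tangent sheaf $T^{\log}_V = \bigoplus_{\rho \notin \rho_J} \cO_V \cdot \partial_\rho \oplus \bigoplus_{k} \cO_V \cdot (x_{j_k} \partial_{j_k})$. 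Therefore $\TT^{\log}_{[V/G]}$ is represented by the two-term complex $[\cO_V^{\oplus s} \to T^{\log}_V]$, with differential obtained by log-adjusting the derivative of the torus action.

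The last step is descent to $X$. The generator $\partial_\rho$ has $G$-weight $\chi_\rho$, so $\cO_V \cdot \partial_\rho$ descends to $\cO_X(D_\rho)$; the log generator $x_{j_k} \partial_{j_k}$ has weight $0$, so $\cO_V \cdot (x_{j_k} \partial_{j_k})$ descends to $\cO_X$. Because $G$ acts freely on $V^{ss}$, the differential $\cO_V^{\oplus s} \to T^{\log}_V$ restricted to $V^{ss}$ is injective as a map of locally free sheaves, so descent is exact on the left and yields the stated log Euler sequence on $X$. The same injectivity argument also gives the second claim: over the semistable locus $\TT^{\log}_{[\AAA^M/\GG_m^s]}$ is quasi-isomorphic to its zeroth cohomology, a sheaf. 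The main obstacle is the $G$-equivariant bookkeeping needed to identify the kernel of the mapping-cone surjection with $T^{\log}_V$ in a way that preserves the $G$-weights of the generators; once this is set up, the descent reads off the $\cO_X(D_\rho)$ and $\cO_X$ factors directly.
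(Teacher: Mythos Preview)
Your argument follows the same route as the paper's: compute the cone of $\TT_{[V/G]} \to \mathbf{L}S^*\TT_{[\AAA^r/\GG_m^r]}$ explicitly, use surjectivity of the tangent-direction component to reduce to a two-term complex, and read off the Euler sequence after descent. Your identification of the kernel with $T^{\log}_V = \bigoplus_{\rho\notin J}\cO_V\,\partial_\rho \oplus \bigoplus_k \cO_V\,(x_{j_k}\partial_{j_k})$ is in fact more explicit than what the paper writes, and the descent step is correct.

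There is one genuine gap, however, in the second claim. You only assert that the differential $\cO_V^{\oplus s}\to T^{\log}_V$ is injective \emph{over the semistable locus}, and conclude that $\TT^{\log}_{[\AAA^M/\GG_m^s]}$ is a sheaf there. But the lemma asserts this on the full stack $[\AAA^M/\GG_m^s]$, and that is what is actually used later: in the proof of irreducibility for $(\PP^N,H)$ one pulls back $\TT^{\log}_{[\AAA^{N+1}/\GG_m]}$ along the universal quasimap $u\colon \cC\to[\AAA^{N+1}/\GG_m]$, and quasimaps may hit the unstable locus at basepoints. Freeness of the $G$-action on $V^{ss}$ is therefore not enough.

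The fix is easy and is what the paper does: the map is injective on \emph{all} of $\AAA^M$ as a map of $\cO$-modules. In your coordinates the image of $e_i$ has components $(a_{i\rho}x_\rho)_{\rho\notin J}$ and the \emph{constants} $(a_{i,j_k})_k$. If $\sum_i c_i e_i$ lies in the kernel then $\sum_i c_i a_{i\rho}\,x_\rho=0$ for $\rho\notin J$ forces $\sum_i c_i a_{i\rho}=0$ (the $x_\rho$ are non-zerodivisors), and combined with $\sum_i c_i a_{i,j_k}=0$ this says $(c_i)$ lies in the kernel of the weight matrix, which has full rank $s$ for a genuine toric GIT presentation. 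Hence injectivity holds globally and $\TT^{\log}_{[\AAA^M/\GG_m^s]}$ is quasi-isomorphic to a sheaf on the whole stack.
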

\begin{proof}
	When $X$ is toric the morphism $\TT_{[\AAA^M/\GG_m^s]} \rightarrow \mathbf{L}S^*\TT_{[\AAA^r/\GG_m^r]}$ becomes very explicit, given by the following diagram
	
		\begin{equation}\label{morphismcotangent}
		\begin{tikzcd}
		\cO_{\AAA^M}^{\oplus s} \arrow[rr, bend left, "{e_i \mapsto(a_{i,1}x_1,\dots,a_{i,M}x_M)}"] \arrow[dd, "e_i \mapsto (\deg_i s_j)_j "'] &  & \cO_{\AAA^M}^{\oplus M} \arrow[dd, "e_i \mapsto \left(\frac{\partial s_j}{\partial x_i}\right)_j "] \\
		&                                                                                                                                                                                                                                 \\
		\cO_{\AAA^M}^{\oplus r} \arrow[rr, bend right, "e_i \mapsto s_i e_i"]                                                                               &  & \cO_{\AAA^M}^{\oplus r}                                                                                                                                                                                                          
		\end{tikzcd}
		\end{equation}
	
	If $D = D_1 + \dots + D_r$ is a subset of the toric boundary then $s_1,\dots s_r$ are just $x_{j_1},\dots,x_{j_r}$ corresponding to the homogeneous coordinates on $X$. But since the right-hand vertical map is given by differentiating the sections this map becomes $e_i \mapsto (0,\dots,1,\dots,0)$ if $i=j_{k}$ for some $k$ with $1$ in the $k^{\mathrm{th}}$ place and $e_i \mapsto (0,\dots,0)$ otherwise. Consequently, by forming the mapping cone (and shifting) we have that $\TT^{\log}_{[\AAA^M/\GG_m^s]}$ is given by the three term complex 
	\begin{equation*}
	\cO_{\AAA^M}^{\oplus s} \rightarrow \cO_{\AAA^M}^{\oplus M} \oplus \cO^{\oplus r}_{\AAA^M} \rightarrow \cO_{\AAA^M}^{\oplus r}
	\end{equation*}
	But the right-hand map is now surjective so this complex is quasi-isomorphic to a two term complex 
	\begin{equation}
	\cO^{\oplus s}_{\AAA^M} \rightarrow \cO_{\AAA^M}^{\oplus M - r} \oplus \cO^{\oplus r}_{\AAA^M}
	\end{equation}
	Where in the second term the first ${M-r}$ copies have action according to the weight matrix and the latter $r$ copies have the trivial action. Since the first map is injective the second part follows. For the first part we just pullback this complex under the inclusion $X \hookrightarrow [\AAA^M/\GG_m^s]$. 
\end{proof}

\begin{corollary}\label{toricperfect}
	If $D$ is a subset of the toric boundary then the complex $(\mathbf{R} \pi_{*} \mathbf{L}u^* \TT^{\log}_{[\AAA^M/\GG_m^s]})^{\vee}$ is of perfect amplitude contained in $[-1,0]$.
\end{corollary}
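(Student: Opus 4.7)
The plan is to use the explicit presentation of $\TT^{\log}_{[\AAA^M/\GG_m^s]}$ afforded by Lemma \ref{toricbdyvfc} to bypass the fibrewise spectral-sequence analysis of Proposition \ref{perfect}. Because the lemma already produces a quasi-isomorphism between $\TT^{\log}_{[\AAA^M/\GG_m^s]}$ and a single coherent sheaf, the derived pullback via $u$ lands in a very restricted range of cohomological degrees, and the perfect-amplitude statement then becomes a routine consequence of standard facts about $\mathbf{R}\pi_{*}$ on proper flat families of curves.

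The first step is to upgrade ``quasi-isomorphic to a sheaf'' to ``quasi-isomorphic to a vector bundle''. One way is to appeal to the fact that the toric stack $[\AAA^M/\GG_m^s]$ equipped with its divisorial logarithmic structure is logarithmically smooth, so its logarithmic tangent sheaf is locally free. Alternatively one argues directly from the two-term presentation $\cO^{\oplus s} \to \cO^{\oplus M-r}\oplus \cO^{\oplus r}$ of Lemma \ref{toricbdyvfc}: the component of the differential into the constant summand $\cO^{\oplus r}$ is a matrix recording the degrees of the toric sections $s_1,\dots,s_r$, which in the toric boundary setting has full row rank, making the differential fibrewise injective and the cokernel $\cF$ locally free on $[\AAA^M/\GG_m^s]$.

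Once $\cF$ is known to be locally free, $\mathbf{L}u^{*}\TT^{\log}_{[\AAA^M/\GG_m^s]}$ is quasi-isomorphic to the ordinary pullback $u^{*}\cF$, a vector bundle on $\cC_{\cQ}$ concentrated in a single cohomological degree with no Tor contribution. The morphism $\pi \colon \cC_{\cQ} \to \qlog$ is a proper flat family of nodal curves, so $\mathbf{R}\pi_{*}$ of any vector bundle is perfect of amplitude contained in $[0,1]$ by standard base-change arguments together with the one-dimensionality of the fibres. Dualising this perfect complex reverses the amplitude to $[-1,0]$, which is what we need. The one nontrivial step is verifying local freeness of $\cF$; once that is in hand, the remaining manipulations are entirely formal, and in particular avoid the fibrewise cohomological analysis that was needed in the proof of Proposition \ref{perfect}.
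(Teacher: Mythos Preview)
Your argument hinges on the claim that the cokernel sheaf $\cF$ representing $\TT^{\log}_{[\AAA^M/\GG_m^s]}$ is locally free, and both of your proposed justifications for this fail. For (b), the degree matrix $(a_{i,j_l})$ has size $s \times r$; whenever $r<s$ it cannot have full row rank, and there is no reason in general for the submatrix of the weight matrix on the columns indexed by $J$ to have rank $s$. For (a), the quotient $[\AAA^M/\GG_m^s]$ is a genuine Artin stack: the morphism $[\AAA^M/\GG_m^s]\to[\AAA^r/\GG_m^r]$ is smooth but typically not representable (the homomorphism $\GG_m^s\to\GG_m^r$ has positive-dimensional kernel), so logarithmic smoothness only forces the relative cotangent complex to be perfect in $[0,1]$, not to be a locally free sheaf. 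Concretely, take $X=\PP^1\times\PP^1=\AAA^4\GIT\GG_m^2$ with weight matrix $\left(\begin{smallmatrix}1&1&0&0\\0&0&1&1\end{smallmatrix}\right)$ and $D=\{x_1=0\}$, so $s=2$, $r=1$. The two-term presentation of Lemma~\ref{toricbdyvfc} is $\cO^{\oplus 2}\to\cO^{\oplus 3}\oplus\cO$ with $e_1\mapsto(x_2,0,0;1)$ and $e_2\mapsto(0,x_3,x_4;0)$; at the origin of $\AAA^4$ this sends $e_2$ to zero, so the differential is not fibrewise injective and $\cF$ has rank $3$ there against $2$ generically. Thus $\cF$ is not a vector bundle, $\mathbf{L}u^*\cF$ need not coincide with $u^*\cF$, and your ``routine'' step collapses.

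The paper's implicit argument (and the honest content of the word \emph{Corollary}) is that Lemma~\ref{toricbdyvfc} feeds back into the proof of Proposition~\ref{perfect} and shortens it: since $\TT^{\log}$ is now a sheaf in a single degree, the restriction $F^\bullet$ has $H^1(F^\bullet)=0$ on the nose rather than merely being torsion, and the spectral sequence collapses immediately. If you want a self-contained simplified proof, work with the two-term free resolution rather than its cokernel: $\mathbf{L}u^*\TT^{\log}$ is represented by $[\cO_{\cC}^{\oplus s}\to E]$ with $E$ a sum of line bundles, and the quasimap non-degeneracy condition (which you never invoked) guarantees that on every irreducible component of every fibre the generic point lands in $X$, where the logarithmic Euler sequence~\eqref{logeuler} is a genuine short exact sequence of bundles. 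Hence the pulled-back differential is sheaf-injective, $\mathbf{L}u^*\TT^{\log}$ is again a sheaf in degree $0$, and the remaining pushforward and perfectness checks go through as in Proposition~\ref{perfect}.
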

\begin{corollary}
	There is a relative perfect obstruction theory on $\qlog$ over $\frM^{\log}_{g,\alpha}([\AAA^r/\GG_m^r])$ leading to a virtual fundamental class $[\qlog]^{\vir}$.
\end{corollary}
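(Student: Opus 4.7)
The plan is to combine Proposition \ref{obstructiontheory} with Corollary \ref{toricperfect} and then invoke the Behrend--Fantechi machinery, exactly as in the proof of Theorem \ref{logqvir}. The only point that needs reassurance is that the obstruction theory argument of \ref{obstructiontheory} was phrased for the general setup $[W/G] \to [\AAA^r/\GG_m^r]$ and therefore specialises without change to the toric case $X \hookrightarrow [\AAA^M/\GG_m^s]$ with $D$ a subset of the toric boundary; the flatness hypothesis on $[W/G] \to [\AAA^r/\GG_m^r]$ used there is manifest in the toric setting since the morphism \eqref{morphismcotangent} on tangent complexes is induced by the coordinate projection, which is flat.

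With the obstruction theory morphism
\[
\phi : \mathbf{R}\pi_{*}\bigl(\mathbf{L}u^{*}\LL^{\log}_{[W/G]} \otimes \omega_{\pi}\bigr) \longrightarrow \LL_{\qlog/\frM^{\log}_{g,\alpha}([\AAA^r/\GG_m^r])}
\]
in hand from \ref{obstructiontheory}, the next step is to upgrade it to a \emph{perfect} obstruction theory. In the general setting this required the nontrivial analysis of Proposition \ref{perfect}, but in the toric divisor case Corollary \ref{toricperfect} tells us directly that
\[
\cE^{\bullet} \;=\; \bigl(\mathbf{R}\pi_{*}\mathbf{L}u^{*}\TT^{\log}_{[\AAA^M/\GG_m^s]}\bigr)^{\vee}
\]
is of perfect amplitude in $[-1,0]$, and by the duality identification recalled after Lemma \ref{toricbdyvfc} (cf.\ \cite{fausk2003isomorphisms}) this is quasi-isomorphic to the source of $\phi$. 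Thus $\phi$ is a perfect relative obstruction theory over $\frM^{\log}_{g,\alpha}([\AAA^r/\GG_m^r])$.

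Finally, one applies the construction of \cite{behrend1997intrinsic} to the pair $(\qlog, \phi)$ to produce the virtual fundamental class $[\qlog]^{\vir}$ of the expected dimension. The main (and only genuine) obstacle in this corollary is purely bookkeeping: checking that the toric-case perfect amplitude result \ref{toricperfect} slots correctly into the general framework of \ref{obstructiontheory} — in particular, that the identification of $\LL^{\log}_{[W/G]}$ with the derived dual of $\TT^{\log}_{[W/G]}$ (used to pass between Proposition \ref{perfect} and its toric refinement) is compatible with the commutative diagram appearing in the deformation-theoretic comparison of \ref{obstructiontheory}. Once this bookkeeping is recorded, the conclusion is immediate from Behrend--Fantechi.
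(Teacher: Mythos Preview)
Your proposal is correct and matches the paper's approach exactly: the paper gives no explicit proof for this corollary, treating it as immediate from combining Proposition~\ref{obstructiontheory} with Corollary~\ref{toricperfect} and invoking \cite{behrend1997intrinsic}, precisely mirroring the one-line proof of Theorem~\ref{logqvir} with \ref{toricperfect} substituted for \ref{perfect}. Your additional remarks on flatness and bookkeeping are sound but go beyond what the paper records.
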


\section{Comparison with Battistella--Nabijou theory}\label{BN}
In \cite{battistella2021relative} the theory of relative quasimaps is developed in the case where $X$ is a smooth projective toric variety, $D \subset X$ is a smooth, very ample divisor (not necessarily toric) and in genus-zero. This is achieved by mimicking the relative Gromov--Witten construction in \cite{gathmann2002absolute} and so the relative quasimap moduli space is defined as a closed substack of the space of absolute quasimaps.

Let $X$ be a smooth projective toric variety associated to a complete fan $\Sigma$ and $D$ a smooth (very ample) divisor, cut out by a section $s_D \in H^0(X,\cO_{X}(D))$. Recall that  given an ordinary quasimap from a (marked) curve $C$ to $X$ there is an \textit{induced} line bundle section pair $(L_D,u_D)$ and in the case where there are no basepoints these are just the pullbacks of $\cO_{X}(D)$  and $s_D$ respectively along the map $C \rightarrow X$.

\begin{definition}\label{def : relquasimaps}[\cite{battistella2021relative}] Let $n \geq 2$ be the number of marked points, let $\beta$ be an effective curve class and $\alpha = (\alpha_1, \dots, \alpha_n)$ such that $\sum_{i} \alpha_i \leq D \cdot \beta$. Define the \textit{moduli space of relative quasimaps} $\cQ^{\mathrm{rel}}_{0,\alpha}(X|D,\beta)$ to be the locus of absolute quasimaps $\left((C, p_1, \dots, p_n), \{L_{\rho}, u_{\rho}\}_{\rho \in \Sigma(1)}, \{c_m\}_{m \in M} \right)$ in $\cQ_{0,n}(X,\beta)$ such that for every $Z$, a connected component of $u_{D}^{-1}(0)$ we have
	\begin{enumerate}\label{Gathdition}
		\item If $Z$ is a point, then either it is unmarked or a marked point $p_i$ such that the multiplicity of $f$ at $p_i$ is at least $\alpha_i$.
		\item If $Z$ is one dimensional let $C^{(i)}$ for $1 \leq i \leq r$ be the irreducible components of $C$ not in $Z$, but intersecting $Z$, and let $m^{(i)}$ be the multiplicity of $u_D|_{C^{(i)}}$ at the node $C^{(i)} \cap Z$ along $D$. Then we must have  $\deg L_{D}|_Z + \sum\limits_{i} m^{(i)} \geq \sum\limits_{p_i \in Z} \alpha_i$.
	\end{enumerate}
\end{definition}
\begin{remark}\label{toricquotientstack}
	The definition of absolute quasimap here is taken from \cite{ciocan2010moduli}. If we write the toric variety $X$ as a GIT quotient $\AAA^{M} \GIT\GG_m^s$ as prescribed by the fan $\Sigma$, then this definition is equivalent to the Definition \ref{absquasimapdef}, involving a morphism $C \rightarrow [\AAA^M/\GG_m^s]$. 
\end{remark}
\begin{remark}
	let $X = \PP^N$ and let $D = H \cong \PP^{N-1}$ be the hyperplane given in coordinates by $\{x_0 = 0\}$. Then $\cQ_{0,\alpha}^{\mathrm{rel}}(\PP^N|H,d)$ is irreducible of dimension $\dim\cQ_{0,n}(\PP^N,d) - \sum\limits_i \alpha_i$ and so has an actual fundamental class with which one can define relative quasimap invariants. For the general case of $(X,D)$, note that $\cO_{X}(D)$ defines a map $j : X \hookrightarrow \PP^N$. Battistella and Nabijou show that the following diagram is cartesian (with $d= j_{\ast} \beta$)
	
	\begin{equation*}	
	\begin{tikzcd}
	{\mathcal{Q}^{\mathrm{rel}}_{0,\alpha}(X|D,\beta)} \arrow[d, hook] \arrow[r, "i'"] & {\mathcal{Q}^{\mathrm{rel}}_{0,\alpha}(\mathbb{P}^N|H,d)} \arrow[d, hook] \\
	{\cQ_{0,n}(X,\beta)} \arrow[r, "j'"]           & {\cQ_{0,n}(\mathbb{P}^N,d)}.                       
	\end{tikzcd}
	\end{equation*}
	Then they use diagonal pull-back to define a virtual fundamental class on $\cQ_{0,\alpha}^{\mathrm{rel}}(X|D,\beta)$ and define relative quasimap invariants.
\end{remark}
\begin{remark}\label{equality}
	From now on we restrict to the case where $\sum_i \alpha_i = D \cdot \beta$ in which case the inequalities in Definition \ref{def : relquasimaps} (2), become equalities.
\end{remark}
\begin{theorem}\label{eq}
	Let $g=0$ and $D \subset X$ be a smooth, very ample divisor inside a smooth projective toric variety. There is a morphism $g: \qlogo \rightarrow \qrel $ and $$g_{\ast}[\qlogo]^{\mathrm{vir}} = [\qrel]^{\vir}.$$
\end{theorem}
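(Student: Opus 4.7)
The plan is to follow the strategy indicated in the outline: first establish the statement for the pair $(\PP^N,H)$, then deduce the general case by pulling back along the very ample embedding $j: X \hookrightarrow \PP^N$ induced by $\cO_{X}(D)$.

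I would begin by constructing the morphism $g$. Given a family of logarithmic quasimaps to $(X,D)$, the underlying family of absolute quasimaps to $X$ is obtained by forgetting the logarithmic structure on both the curve and the map to $[\AAA^1/\GG_m]$. To see that the underlying quasimap lies in $\qrel$, one must verify Gathmann's condition on each connected component $Z$ of the vanishing locus of the induced section $u_D$. This is a tropical computation: the logarithmic enhancement of $C \to [\AAA^1/\GG_m]$ records, at each basepoint and each node adjacent to $Z$, the contact order of $u_D$ with the origin, and the balancing condition for logarithmic maps (together with the degree relation $\sum_j\alpha_{i,j} = D_i\cdot\beta$ and the equality case of Remark \ref{equality}) is exactly the numerical inequality in Gathmann's condition. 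Since $\qrel$ is cut out by a closed condition in the absolute space $\cQ_{0,n}(X,\beta)$ and $\qlogo$ also sits over $\cQ_{0,n}(X,\beta)$, functoriality produces the required morphism $g$, which is moreover proper because both spaces are proper.

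For the case $(X,D) = (\PP^N,H)$, Example \ref{PNHlogquasimapspace} together with the Battistella--Nabijou result identifies both sides as irreducible of the expected dimension, and both contain the open locus $U$ of honest maps from smooth rational curves meeting $H$ with prescribed contact orders. On $U$, the minimal logarithmic structure is the trivial one and the morphism $g$ is an isomorphism. Hence $g$ is birational and proper between irreducible stacks of the same dimension, so $g_{\ast}[\qlogo] = [\qrel]$. Since the logarithmic obstruction theory from Theorem \ref{logqvir} has the same virtual dimension as $\qlogo$ in this case, $[\qlogo]^{\vir} = [\qlogo]$, and likewise the Battistella--Nabijou class equals the fundamental class. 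The identity of cycle classes in the $(\PP^N,H)$ case follows.

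For the general case, I would establish a cartesian square on the logarithmic side
\[
\begin{tikzcd}
\qlogo \arrow[d] \arrow[r] & {\cQ^{\log}_{0,\alpha}(\PP^N|H,d)} \arrow[d] \\
\cQ_{0,n}(X,\beta) \arrow[r, "j'"] & \cQ_{0,n}(\PP^N,d)
\end{tikzcd}
\]
analogous to the Battistella--Nabijou square, using that the logarithmic structure on $\qlogo$ is pulled back from $\cC_S \to [\AAA^1/\GG_m]$, which only depends on $(\cO_X(D), s_D) = j^{\ast}(\cO_{\PP^N}(H),s_H)$, so the log enhancement in the $X$-setting is the same as in the $\PP^N$-setting. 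The morphism $g$ for $(X,D)$ is then the restriction of $g$ for $(\PP^N,H)$ via this square, and the compatibility of $g$ with $j'$ and its relative counterpart yields the result by diagonal pullback: applying the Costello/Manolache virtual pullback compatibility to both square constructions of the virtual classes and invoking the $(\PP^N,H)$ identity proved above gives $g_{\ast}[\qlogo]^{\vir} = [\qrel]^{\vir}$.

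The main obstacle I anticipate is the second step: verifying that the obstruction theory defined in Theorem \ref{logqvir} is compatible with the diagonal pullback used by Battistella--Nabijou, or equivalently that it is the pullback of the obstruction theory in the $(\PP^N,H)$ case via $j'$. This requires comparing the complex $\mathbf{R}\pi_{\ast}(\mathbf{L}u^{\ast}\LL^{\log}_{[W/G]}\otimes \omega_\pi)$ for $X$ with the analogous complex for $\PP^N$ through the distinguished triangle associated to $[W/G]\to[V/G]$ and the hypotheses on $W$, and checking that the resulting difference matches the conormal contribution coming from the l.c.i. inclusion $\cQ_{0,n}(X,\beta)\hookrightarrow \cQ_{0,n}(\PP^N,d)$.
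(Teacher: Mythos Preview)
Your proposal is correct and follows essentially the same route as the paper: construct $g$ via the tropical/balancing constraint (the paper's Proposition~\ref{log2rel}), settle $(\PP^N,H)$ by irreducibility and birationality (Lemma~\ref{PNHirred} and Proposition~\ref{BNPN}), then pull back along the very ample embedding using a cartesian square and Manolache's virtual pullback compatibility (Proposition~\ref{obstructionP} and the final proof). The ``main obstacle'' you anticipate is exactly Proposition~\ref{obstructionP}, and the paper resolves it precisely as you suggest, via the distinguished triangle $\TT_{j} \to \TT^{\log}_{[\AAA^M/\GG_m^s]} \to \mathbf{L}j^*\TT^{\log}_{[\AAA^{N+1}/\GG_m]}$ and the compatible-triple formalism of \cite{manolache12virtual}.

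One small point to tighten: the sentence ``Since the logarithmic obstruction theory from Theorem~\ref{logqvir} has the same virtual dimension as $\qlogo$ in this case, $[\qlogo]^{\vir} = [\qlogo]$'' is not a valid inference---equality of virtual and actual dimension does not by itself force the virtual class to equal the fundamental class. What you need (and what the paper supplies in Lemma~\ref{PNHirred}) is that the obstruction bundle vanishes, which follows from the logarithmic Euler sequence for $(\PP^N,H)$ giving $H^1(C,\cF)=0$ fibrewise; this makes $\cQ^{\log}_{0,\alpha}(\PP^N|H,d)$ smooth over the log-smooth base $\frM^{\log}_{0,\alpha}(\agm)$, hence unobstructed.
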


The strategy for proving Theorem \ref{eq} is as follows. We will first prove the existence of the morphism $g$. Then we prove Theorem \ref{eq} for the case of $X=\PP^N$ and $D=H$ a hyperplane. We will then use the ampleness condition to pullback this result to the general case.

\begin{proposition}\label{log2rel}
	The natural morphism  $\qlogo \rightarrow \cQ_{0,n}(X,\beta)$ given by forgetting the logarithmic map to $\agm$ factors through the inclusion $\cQ^{\mathrm{rel}}_{0,\alpha}(X | D, \beta) \hookrightarrow \cQ_{0,n}(X,\beta)$.
\end{proposition}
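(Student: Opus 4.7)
The plan is to show that the underlying quasimap of any logarithmic quasimap satisfies Gathmann's conditions (1) and (2), and that this is a closed condition on families so that the natural morphism factors through $\qrel$. By Remark \ref{equality} we are in the equality regime throughout, so I need to verify the two conditions with equality. Since the Gathmann conditions are fibrewise, I would work over a geometric point: there a logarithmic quasimap consists of a stable quasimap $((C,p_\ast),u)$ together with a logarithmic enhancement of the induced map $C \to \agm$, encoding the line bundle--section pair $(L_D, u_D)$ with prescribed contact orders $\alpha_i$ at the markings $p_i$.

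For the zero-dimensional case of condition (1), I would consider an isolated component $Z = \{q\}$ of $u_D^{-1}(0)$. The only non-vacuous case is $q = p_i$ a marking, lying on an irreducible component on which $u_D$ is not identically zero. Pulling back the chart $\NN \to \overline{\cM}_{\agm}$ along the logarithmic morphism $C \to \agm$, the induced map on the stalk $\overline{\cM}_{C,p_i}$ sends the generator to the contact order $\alpha_i$ on one hand, and to the vanishing order of $u_D$ at $p_i$ on the other. Hence the multiplicity of $u_D$ at $p_i$ is exactly $\alpha_i$, which verifies (1).

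For the one-dimensional case of condition (2), I would exploit the tropical balancing that governs logarithmic maps to $\agm$. Let $Z$ be a one-dimensional connected component of $u_D^{-1}(0)$, so each irreducible component $C_v \subset Z$ satisfies $u_D|_{C_v} \equiv 0$. Restricting the logarithmic map to $C_v$, the standard vertex-balancing for logarithmic maps to $\agm$ gives
\[
\deg L_D|_{C_v} \;=\; \sum_{p_j \in C_v} \alpha_j \;+\; \sum_{\text{nodes } e \text{ at } v} \lambda_{v,e},
\]
where $\lambda_{v,e}$ denotes the contact order at the node $e$ measured from the $v$-side. Summing over $v \in Z$: internal nodes (joining two vertices of $Z$) contribute pairs $\lambda_{v_1,e} + \lambda_{v_2,e} = 0$ by the edge balancing of a log morphism and therefore drop out, while external nodes (joining some $C_v \subset Z$ to a component $C^{(i)} \not\subset Z$) contribute $\lambda_{v,e} = m^{(i)}$, since on $C^{(i)}$ the section $u_D$ vanishes to order $m^{(i)}$ at the node and this is matched by the contact order on the vanishing side. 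This produces
\[
\deg L_D|_Z + \sum_i m^{(i)} \;=\; \sum_{p_j \in Z} \alpha_j,
\]
which is condition (2) with equality.

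The step I expect to be the main obstacle is writing down the vertex-balancing formula with the correct sign conventions for node contact orders, and checking that the contact order from the vanishing side of a node really matches the vanishing order from the non-vanishing side. Both are well established in the logarithmic Gromov--Witten literature but require careful bookkeeping. Once the geometric point version is in hand, the factorization through $\qrel$ follows because the Gathmann conditions cut out a closed substack of $\cQ_{0,n}(X,\beta)$.
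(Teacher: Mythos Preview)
Your approach is correct and is essentially the paper's: both reduce condition (2) to the tropical balancing constraint for logarithmic maps to $\agm$, the paper packaging this globally as the line bundle on $Z$ associated to the pulled-back piecewise linear function (which it identifies as $\cO(\sum_{p_i \in Z}\alpha_i p_i - \sum_i m^{(i)} q_i)$ and then takes degrees), while you sum the same identity vertex by vertex. Your anticipated sign issue is real---with your conventions the outgoing slope from the $Z$-side at an external node is $-m^{(i)}$ rather than $+m^{(i)}$---and once this is corrected your summation yields exactly the equality you state.
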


\begin{proof}
	Certainly on the locus where $C \cong \PP^1$ is irreducible and the section $u_0$ vanishes only at the marked points, this is true. Suppose now we have logarithmic quasimap which contains a connected component $Z \subset C$ on which $u_0 \equiv 0$. Then we need to show that $$\deg L_D|_Z + \sum\limits_{i} m^{(i)} = \sum\limits_{p_i \in Z} \alpha_i.$$
	Suppose first that $Z$ is irreducible. We have a logarithmic morphism $C \rightarrow \agm$. This induces a morphism of the tropicalisations. As in toric geometry, a piecewise linear function on the tropicalisation induces a Cartier divisor. Alternatively, a logarithmic structure can be characterised by an association of a Cartier divisor to each element of the ghost sheaf. The identity function on $\RR_{\geq 0}$ induces the divisor $\mathcal{B}\GG_m$. The fact that we have a logarithmic morphism necessarily implies that the pull back of this line bundle via the morphism is the same as the line bundle associated to the pull-back piecewise linear function. On the one hand the line bundle pulls back to $L_D$, when restricting to $Z$ we get $L_D|_Z$. On the other hand, the pull back piecewise linear function is defined by the slopes of the tropicalisation map on each ray. The associated line bundle on the component $Z$ is shown to be $\cO(\sum_{p_i \in Z} \alpha_i p_i - \sum_i m^{(i)}q_i)$, where $q_i$ are the nodes, in ~\cite[2.4.1]{ranganathan2019moduli}. Since these line bundles are necessarily isomorphic, taking degrees gives us the desired equality. If $Z$ is connected but reducible, then summing the resulting equalities obtained by the above over the irreducible components of $Z$ produces the result. 
\end{proof}

\begin{lemma}\label{PNHirred}
	Suppose $(X|D) = (\PP^N|H)$ where $H$ is the hyperplane defined (in coordinates) by $\{x_0 = 0\}$. Then $\cQ_{0,\alpha}^{\log}(\PP^N|H,d)$ is irreducible of the expected dimension $N\cdot (d+1) + n -3$.
\end{lemma}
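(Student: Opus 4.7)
The plan is to isolate a dense open locus $U\subset\cQ^{\log}_{0,\alpha}(\PP^N|H,d)$ whose irreducibility and dimension can be read off by a direct parameter count, and then show every other point of the moduli space lies in $\overline{U}$.

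I would take $U$ to parametrise logarithmic quasimaps whose source is a smooth $\PP^1$ and whose map is not contained in $H$ (equivalently $u_0\not\equiv 0$). Because $\sum_i \alpha_i = d$ and $u_0\in H^0(\PP^1,\cO(d))$, the contact-order condition together with non-degeneracy forces $u_0$ to have divisor of zeros \emph{exactly} $\sum_i \alpha_i p_i$; on this locus the minimal logarithmic structure is simply the divisorial one pulled back from that divisor. Thus $U$ is identified with the locus in the absolute quasimap space of smooth quasimaps with prescribed tangencies. Once markings are fixed on $\PP^1$, $u_0$ is determined up to scalar while $u_1,\ldots,u_N$ range freely in $H^0(\PP^1,\cO(d))$; quotienting by the overall $\Gm$-scaling and by $\mathrm{Aut}(\PP^1;p_1,\ldots,p_n)$ yields $\dim U = N(d+1)+n-3$. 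In particular $U$ is open in an affine bundle over an open in $\frM_{0,n}$, hence irreducible.

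Next I would invoke Proposition~\ref{log2rel} to obtain a proper morphism $g\colon \cQ^{\log}_{0,\alpha}(\PP^N|H,d)\to\cQ^{\mathrm{rel}}_{0,\alpha}(\PP^N|H,d)$, and note that the target is irreducible of the same dimension $N(d+1)+n-3$: by definition $\cQ^{\mathrm{rel}}$ is the scheme-theoretic closure in the absolute quasimap space of precisely the locus corresponding under $g$ to $U$. Uniqueness of the minimal log enhancement on $U$ shows that $g|_U$ is an isomorphism onto its image, and properness then forces $g$ to be surjective onto $\cQ^{\mathrm{rel}}$.

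The remaining step -- and the main obstacle -- is ruling out extra irreducible components of $\cQ^{\log}_{0,\alpha}(\PP^N|H,d)$ lying over the boundary of $\cQ^{\mathrm{rel}}$. For this I would use that in genus zero the combinatorial types parametrised by $\frM^{\log}_{0,\alpha}(\agm)$ form a directed system under edge contractions with the smooth type as generic point, so every tropical type is a specialisation of the generic one; through the fibre product presentation~\eqref{snclogquasimap} (which in the smooth-divisor case has a single factor on the right), this irreducibility is inherited by $\cQ^{\log}_{0,\alpha}(\PP^N|H,d)$. Concretely, any boundary log quasimap admits a one-parameter smoothing that lifts to $\qlogo$: on the source, log smoothness of $\frM^{\log}_{0,\alpha}(\agm)$ lets us smooth nodes while preserving contact orders, and on the target the deformation obstructions vanish since $\PP^N$ is convex and $H$ is smooth. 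Combining this smoothing with the dimension count for $U$ completes the argument.
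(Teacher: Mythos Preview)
Your plan takes a genuinely different route from the paper. The paper argues purely obstruction-theoretically: by the logarithmic Euler sequence of Lemma~\ref{toricbdyvfc}, the complex $\mathbf{L}u^*\TT^{\log}_{[\AAA^{N+1}/\GG_m]}$ is quasi-isomorphic to a sheaf $\cF$ sitting in
\[
0\to\cO_{\cC}\to\cO_{\cC}\oplus\cL^{\oplus N}\to\cF\to 0,
\]
and since $g=0$ and $\deg\cL=d\ge 0$ the long exact sequence gives $H^1(C,\cF)=0$ on every geometric fibre. Hence $\cQ^{\log}_{0,\alpha}(\PP^N|H,d)\to\frM^{\log}_{0,\alpha}(\agm)$ is smooth, and the conclusion follows from log smoothness of the base. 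No explicit open locus, parameter count, or deformation argument is written down. Your description of $U$ has the virtue of making the dimension formula $N(d+1)+n-3$ visible, whereas the paper extracts it from the rank of the obstruction complex.

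That said, your final step has a real gap. The assertion that ``the deformation obstructions vanish since $\PP^N$ is convex and $H$ is smooth'' is precisely what must be proved, and ordinary convexity of $\PP^N$ (vanishing of $H^1$ for pullbacks of $T_{\PP^N}$) does not by itself give the logarithmic statement $H^1(C,u^*\TT^{\log}_{[\AAA^{N+1}/\GG_m]})=0$: you need the log Euler sequence $0\to\cO\to\cO\oplus\cO(1)^{\oplus N}\to\cT^{\log}_{\PP^N}\to 0$, which is not the usual one, and you need it at the level of the quotient stack since basepoints are allowed. Relatedly, the claim that irreducibility of $\frM^{\log}_{0,\alpha}(\agm)$ is ``inherited'' through the fibre product is false as stated---fibre products do not preserve irreducibility---and your concrete smoothing argument is exactly the statement that $\cQ^{\log}\to\frM^{\log}$ is smooth, which again reduces to the $H^1$-vanishing above. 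Once that vanishing is established, the paper's one-line conclusion is strictly shorter: the detour through $\cQ^{\mathrm{rel}}_{0,\alpha}(\PP^N|H,d)$ in your step~2 becomes unnecessary, and the smoothing argument is subsumed.
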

\begin{proof}
	Recall the obstruction theory on $\cQ_{0,\alpha}^{\log}(\PP^N|H,d)$ is defined by the complex $\left(\mathbf{R} \pi_{*}\left(\mathbf{L}u^* \TT^{\log}_{[\AAA^{N+1}/\GG_m]}\right)\right)^{\vee}$
	Because $\PP^N|H$ admits a logarithmic Euler sequence \eqref{logeuler} $$0 \rightarrow \cO_{\PP^N} \rightarrow \cO_{\PP^N} \oplus \bigoplus_{i=1}^N \cO_{\PP^N}(1) \rightarrow \cT^{\log}_{\PP^N} \rightarrow 0.$$
	It follows from Lemma \ref{toricbdyvfc} that $\mathbf{L}u^*\TT^{\log}_{[\AAA^{N+1}/\GG_m]}$ is quasi-isomorphic to a sheaf $\cF$ which fits into an exact sequence on $\cC$ $$0 \rightarrow \cO_{\cC} \rightarrow \cO_{\cC} \oplus \bigoplus_{i=1}^N \cL \rightarrow \cF \rightarrow 0.$$
	Taking the long exact sequence in cohomology it follows that $$\mathrm{H}^1(C,\cF) = 0$$ and so this moduli space is unobstructed over $\frM_{0,\alpha}^{\log}(\agm)$, which is logarithmically smooth.
\end{proof}

To distinguish from a general $(X,D)$, we denote the morphism $\cQ_{0,\alpha}^{\mathrm{log}}(\PP^N|H,d) \rightarrow \cQ_{0,\alpha}^{\mathrm{rel}}(\PP^N|H,d)$ by $f$.
\begin{proposition}\label{BNPN}
	$$f_* [\cQ_{0,\alpha}^{\mathrm{log}}(\PP^N|H,d)]^{\vir} = [\cQ_{0,\alpha}^{\mathrm{rel}}(\PP^N|H,d)].$$
\end{proposition}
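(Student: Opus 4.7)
The plan is to verify the two sides separately and match them. First I would observe that Lemma \ref{PNHirred} establishes not merely irreducibility of $\cQ_{0,\alpha}^{\log}(\PP^N|H,d)$, but the stronger statement that its relative perfect obstruction theory over $\frM_{0,\alpha}^{\log}(\agm)$ has vanishing $H^1$. Since $\frM_{0,\alpha}^{\log}(\agm)$ is logarithmically smooth, this upgrades to the log moduli space being logarithmically smooth; in particular the two-term complex defining the virtual class is quasi-isomorphic to a locally free sheaf in degree zero. Hence the virtual class is the fundamental class of the underlying (reduced, irreducible) stack: $[\cQ_{0,\alpha}^{\log}(\PP^N|H,d)]^{\vir} = [\cQ_{0,\alpha}^{\log}(\PP^N|H,d)]$.

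Next I would verify that $f$ is birational. Both source and target are irreducible, and by Lemma \ref{PNHirred} the source has dimension $N(d+1) + n - 3$, which using $\sum_i \alpha_i = D\cdot\beta = d$ from Remark \ref{equality} matches the dimension of $\cQ_{0,\alpha}^{\mathrm{rel}}(\PP^N|H,d)$ recorded in the preceding remark. Consider the open dense locus $U\subset \cQ_{0,\alpha}^{\mathrm{rel}}(\PP^N|H,d)$ parametrising quasimaps from a smooth irreducible domain curve whose section $u_D$ vanishes only at the marked points $p_i$ with order exactly $\alpha_i$. Over such a curve $C$, the line bundle-section pair $(L_D,u_D)$ defines a morphism $\underline{C} \to \agm$, and because $\underline{C}$ is smooth and the vanishing locus of $u_D$ is supported on prescribed smooth points with prescribed contact orders, the logarithmic enhancement with contact orders $\alpha$ is canonically and uniquely determined by the divisorial log structure on $C$ at the markings. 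Thus $f^{-1}(U)\to U$ is an isomorphism, so $f$ is birational.

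Combining both steps with the irreducibility and reducedness of $\cQ_{0,\alpha}^{\mathrm{rel}}(\PP^N|H,d)$, the pushforward formula $f_\ast[\cQ_{0,\alpha}^{\log}(\PP^N|H,d)] = [\cQ_{0,\alpha}^{\mathrm{rel}}(\PP^N|H,d)]$ follows from the standard fact that a birational morphism between irreducible reduced stacks of the same dimension pushes the fundamental class to the fundamental class. Together with step one this yields the proposition.

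The main obstacle I anticipate is the middle step: justifying that the logarithmic enhancement is canonically determined over $U$, i.e.\ that the forgetful morphism $f$ is an isomorphism (not merely a bijection on closed points) above $U$. The set-theoretic uniqueness of the log structure for a smooth curve meeting a smooth divisor at prescribed points with prescribed orders is classical, but to conclude birationality of stacks I would need to check that the minimal monoids on both sides agree on $U$, which I would handle by a direct computation with the minimal log structure on a smooth curve with marked points of prescribed contact order to a smooth divisor, along the lines of the analogous comparison in the Gromov--Witten setting.
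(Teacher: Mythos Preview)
Your proposal is correct and follows essentially the same route as the paper: first use Lemma~\ref{PNHirred} to identify the virtual class with the fundamental class, then observe that $f$ restricts to an isomorphism over the dense open locus of smooth-source quasimaps whose section $u_D$ vanishes only at the markings, so that the pushforward of fundamental classes is immediate. The paper's proof is in fact terser than yours---it simply asserts the isomorphism over this locus without further comment---so your anticipated ``main obstacle'' concerning the uniqueness of the minimal log enhancement over $U$ is not treated as a genuine difficulty in the paper; for a smooth curve meeting the divisor only at markings with the prescribed orders, the minimal log structure is the trivial one on the base and the enhancement is the divisorial one on the curve, which is canonical.
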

\begin{proof}
	By Lemma \ref{PNHirred} we have that $[\cQ_{0,\alpha}^{\mathrm{log}}(\PP^N|H,d)]^{\vir} = [\cQ_{0,\alpha}^{\mathrm{log}}(\PP^N|H,d)]$. So the proposition reduces to a statement about \emph{fundamental classes}. By Lemma \ref{PNHirred} and \cite{battistella2021relative} we know that the locus where the source curve is irreducible and the $u_0$ only vanishes at the marked points is dense in both spaces. Furthermore, on this locus the map is an isomorphism so the result follows.
\end{proof}

We now use Proposition \ref{BNPN} to prove Theorem \ref{eq}. 



\begin{proposition}\label{obstructionP}
	There is a perfect obstruction theory on $\mathcal{Q}^{\mathrm{log}}_{0,\alpha}(X|D,\beta)$ relative to $\mathcal{Q}^{\log}_{0,\alpha}(\PP^N|H,d)$ such that the corresponding virtual class, given by virtual pullback of $[\mathcal{Q}^{\log}_{0,\alpha}(\PP^N|H,d)]$, coincides with $[\cQ^{\log}_{0,\alpha}(X|D,\beta)]^{\vir}$.
\end{proposition}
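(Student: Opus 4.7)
The plan is to construct the relative obstruction theory directly from the strict closed embedding $X \hookrightarrow \PP^N$, exhibit it as the third vertex of a distinguished triangle relating it to the two absolute obstruction theories of Theorem \ref{logqvir}, and then invoke the Behrend--Fantechi compatibility machinery to identify the resulting virtual class with $[\qlogo]^{\vir}$.

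First I would note that, because $D = j^{*}H$, the embedding $j$ is strict for the log structures, and at the quotient level arises from a $G$-equivariant $W \hookrightarrow \AAA^{N+1}$ inducing a strict morphism $\phi : [W/G] \to [\AAA^{N+1}/\GG_m]$. Strictness gives $\LL^{\log}_{[W/G]/[\AAA^{N+1}/\GG_m]} = \LL_{[W/G]/[\AAA^{N+1}/\GG_m]}$ and a distinguished triangle
\begin{equation*}
\mathbf{L}\phi^{*}\LL^{\log}_{[\AAA^{N+1}/\GG_m]} \longrightarrow \LL^{\log}_{[W/G]} \longrightarrow \LL_{[W/G]/[\AAA^{N+1}/\GG_m]} \longrightarrow .
\end{equation*}
Pulling back along the universal map $u$, tensoring with $\omega_{\pi}$, and applying $\mathbf{R}\pi_{*}$ produces a distinguished triangle
\begin{equation*}
\mathbf{L}i^{*}\cE^{\bullet}_{\PP^N} \longrightarrow \cE^{\bullet}_X \longrightarrow \cE^{\bullet}_{X/\PP^N} \longrightarrow ,
\end{equation*}
whose first two terms are the absolute obstruction complexes of Theorem \ref{logqvir} and whose third term is the candidate relative obstruction theory.

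Next I would check that $\cE^{\bullet}_{X/\PP^N}$ is of perfect amplitude contained in $[-1,0]$ and defines a relative obstruction theory. Perfectness follows from an argument parallel to Proposition \ref{perfect}: since $j$ is a regular closed embedding of smooth varieties, the relative tangent complex is quasi-isomorphic to $N_{X/\PP^N}[-1]$ on the stable locus; its pullback to the universal curve is then a vector bundle away from basepoints, and basepoint contributions in degree $1$ are torsion, so the fibrewise $H^{-1}$ vanishing needed for \cite[3.6.4]{illusie2005grothendieck} goes through. The obstruction-theory property is then verified by rerunning the Olsson-style lifting argument of Proposition \ref{obstructiontheory} with the target $[\AAA^r/\GG_m^r]$ replaced by $[\AAA^{N+1}/\GG_m]$ and the base $\frM^{\log}_{0,\alpha}(\agm)$ replaced by $\cQ^{\log}_{0,\alpha}(\PP^N|H,d)$, with the distinguished triangle above ensuring that the three obstruction classes are mutually compatible.

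Finally, the induced morphism of distinguished triangles of obstruction theories satisfies the compatibility hypothesis of the Behrend--Fantechi virtual pullback formula, so the virtual pullback of $[\cQ^{\log}_{0,\alpha}(\PP^N|H,d)]^{\vir}$ along $i$ with respect to $\cE^{\bullet}_{X/\PP^N}$ coincides with $[\qlogo]^{\vir}$ as defined by the absolute obstruction theory of Theorem \ref{logqvir}. I expect the main obstacle to be the perfectness step: although $N_{X/\PP^N}$ is defined only on the stable locus of $[W/G]$, one must track its coherent extension to all of $[W/G]$ and verify that the resulting torsion contributions in the fibrewise spectral sequence vanish, mimicking the strategy that powered the proof of Proposition \ref{perfect}.
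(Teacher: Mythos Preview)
Your approach is essentially the same as the paper's: build the relative obstruction complex from the distinguished triangle associated to the strict morphism of quotient stacks induced by the very ample embedding, and then use compatibility of the three obstruction theories to identify the virtual classes. The paper phrases the final step in terms of Manolache's virtual pullback formalism (compatible triples, \cite[Construction~3.13, Theorem~4.8]{manolache12virtual}) rather than ``Behrend--Fantechi compatibility,'' but the content is the same.

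The one substantive difference is in how you establish that $\cE^{\bullet}_{X/\PP^N}$ is a perfect obstruction theory. You propose to verify perfectness and the lifting criterion by hand, re-running the arguments of Propositions~\ref{obstructiontheory} and~\ref{perfect} with the normal bundle in place of the logarithmic tangent complex. The paper sidesteps this entirely: once the morphism of distinguished triangles is in place and two of the three vertical arrows are known to be perfect obstruction theories, Manolache's \cite[Construction~3.13, Remark~3.15]{manolache12virtual} gives the third one for free. This is cleaner and avoids exactly the issue you flag as the ``main obstacle,'' namely controlling the coherent extension of $N_{X/\PP^N}$ over the unstable locus of $[\AAA^M/\GG_m^s]$. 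Your direct route should still work, but it duplicates effort that the compatible-triple machinery is designed to absorb.

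One small inaccuracy: the embedding does not arise from a $G$-equivariant map $W \hookrightarrow \AAA^{N+1}$ in the naive sense, since the groups $\GG_m^s$ and $\GG_m$ differ; it is a morphism of quotient stacks $[\AAA^M/\GG_m^s] \to [\AAA^{N+1}/\GG_m]$ induced by the sections of $\cO_X(D)$. This does not affect your argument, but the phrasing should be adjusted.
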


\begin{proof}
	Recall $\cO_{X}(D)$ defined a morphism $j: X \hookrightarrow \PP^N$ such that $j^{-1}(H) = D$. If we write $X = \AAA^M \GIT \GG_m^s$ as a GIT quotient, as in Remark \ref{toricquotientstack}, then this morphism induces a morphism of quotient stacks, which we also denote $j$, $$j: [\AAA^M/\GG_m^s] \rightarrow [\AAA^{N+1}/\GG_m]$$ such that $j^{-1}(\bar{H}) = \bar{D}$, where $\bar{D}$ and $\bar{H}$ are the corresponding divisors on $[\AAA^M/\GG_m^s]$ and $[\AAA^{N+1}/\GG_m]$. The divisors $\bar{D},\bar{H}$ define logarithmic structures via morphisms to $[\AAA^1/\GG_m]$ such that there is a commutative diagram
	\begin{equation*}
		\begin{tikzcd}
		{[\AAA^M/\GG_m^s]} \arrow[r, "j"] \arrow[rr, "\bar{D}", bend left] & {[\AAA^{N+1}/\GG_m]} \arrow[r, "\bar{H}"] & \agm.
		\end{tikzcd}
	\end{equation*}

Taking the induced distinguished triangle (on tangent complexes) gives
    \begin{equation*}
\TT_{j} \rightarrow \TT^{\log}_{[\AAA^M/\GG_m^s]} \rightarrow \mathbf{L}j^*\TT^{\log}_{[\AAA^{N+1}/\GG_m]} \rightarrow \TT_{j}[1].
\end{equation*}

Next, consider the diagram
\begin{equation}\label{diagram0}
	\begin{tikzcd}
	\cC_{X} \arrow[d, "\pi"] \arrow[r] \arrow[rr, "u", bend left] & \cC_{\PP^N} \arrow[d, "\pi_{\PP}"] \arrow[rr, "u_{\PP}", bend right] & {[\AAA^M/\GG_m^s]} \arrow[r, "j"] & {[\AAA^{N+1}/\GG_m]} \\
	\qlogo \arrow[r, "i"]                                              & {\cQ^{\log}_{0,\alpha}(\PP^N|H,d)}.                                          &                                   &                     
	\end{tikzcd}
\end{equation}
If we apply $\mathbf{R}\pi_{*} \circ \mathbf{L}u^{*}$ and dualise, we get a distinguished triangle in $\qlogo$
\begin{equation*}
\mathbf{L}i^*(\mathbf{R}(\pi_{\PP})_{*}\mathbf{L}u_{\PP}^*\TT^{\log}_{[\AAA^{N+1}/\GG_m]})^{\vee} \rightarrow 	(\mathbf{R}\pi_{*}\mathbf{L}u^*\TT^{\log}_{[\AAA^M/\GG_m^s]})^{\vee} \rightarrow (\mathbf{R}\pi_{*}\mathbf{L}u^*\TT_{j})^{\vee} \rightarrow \mathbf{L}i^*(\mathbf{R}(\pi_{\PP})_{*}\mathbf{L}u_{\PP}^*\TT^{\log}_{[\AAA^{N+1}/\GG_m]})^{\vee}[1].
\end{equation*}
Note that the first two complexes in the triangle define the obstruction theories for $\qlogo$ and $\cQ^{\log}_{0,\alpha}(\PP^N|H,d)$.

We also have the commutative diagram
	\begin{equation*}
\begin{tikzcd}
{\qlogo} \arrow[r, "i"] \arrow[rr, bend left] & {\cQ^{\log}_{0,\alpha}(\PP^N|H,d)} \arrow[r] & \frM^{\log}_{0,\alpha}(\agm)
\end{tikzcd}
\end{equation*}
inducing a distinguished triangle $$\mathbf{L}i^*\LL_{\cQ^{\log}(\PP)/\frM^{\log}} \rightarrow \LL_{\cQ^{\log}(X)/\frM^{\log}} \rightarrow \LL_{i} \rightarrow \mathbf{L}i^*\LL_{\cQ^{\log}(\PP)/\frM^{\log}}[1]$$

where $\cQ^{\log}(X) = \cQ^{\log}_{0,\alpha}(X|D,\beta), \, \cQ^{\log}(\PP) = \cQ^{\log}_{0,\alpha}(\PP^N|H,d)$ and $\frM^{\log} = \frM^{\log}_{0,\alpha}(\agm)$. Putting these together we get 
\begin{equation*}
	\begin{tikzcd}
{\mathbf{L}i^*(\mathbf{R}(\pi_{\PP})_{*}\mathbf{L}u_{\PP}^*\TT^{\log}_{[\AAA^{N+1}/\GG_m]})^{\vee}} \arrow[r] \arrow[d] & {(\mathbf{R}\pi_{*}\mathbf{L}u^*\TT^{\log}_{[\AAA^M/\GG_m^s]})^{\vee}} \arrow[r] \arrow[d] & (\mathbf{R}\pi_{*}\mathbf{L}u^*\TT_{j})^{\vee} \arrow[r, "{[1]}"] \arrow[d] & {} \\
\mathbf{L}i^*\LL_{\cQ^{\log}(\PP)/\frM^{\log}} \arrow[r]                                                                & \LL_{\cQ^{\log}(X)/\frM^{\log}} \arrow[r]                                                  & \LL_{i} \arrow[r, "{[1]}"]                                                  & {}.
\end{tikzcd}
\end{equation*}

The first two vertical arrows come from the perfect obstruction theory from Theorem \ref{logqvir}. By ~\cite[Construction 3.13]{manolache12virtual} and ~\cite[Remark 3.15]{manolache12virtual} it follows that $(\mathbf{R}\pi_{*}\mathbf{L}u^*\TT_{j})^{\vee}$ defines a perfect obstruction theory. Moreover in the language of ~\cite[Definition 4.5]{manolache12virtual} the three perfect obstruction theories form a compatible triple and so by ~\cite[Theorem 4.8]{manolache12virtual} we have that $i^{!}[\cQ_{0,\alpha}^{\log}(\PP^N|H,d)] = [\qlogo]^{\vir}$. 

\end{proof}

\begin{proof}[Proof of Theorem \ref{eq}]
	Consider the tower of cartesian diagrams
		\begin{equation*}
\begin{tikzcd}
{\mathcal{Q}^{\mathrm{\log}}_{0,\alpha}(X|D,\beta)} \arrow[d, "g"] \arrow[r, "i"] & {\mathcal{Q}^{\mathrm{\log}}_{0,\alpha}(\mathbb{P}^N|H,d)} \arrow[d, "f"] \\
{\mathcal{Q}^{\mathrm{rel}}_{0,\alpha}(X|D,\beta)} \arrow[r, "i'"] \arrow[d]      & {\mathcal{Q}^{\mathrm{rel}}_{0,\alpha}(\mathbb{P}^N|H,d)} \arrow[d]   \\
{\cQ_{0,n}(X,\beta)} \arrow[r, "j'"]                                              & {\cQ_{0,n}(\PP^N,d)}                                                     
\end{tikzcd}
	\end{equation*}
	
The fact that the top square is cartesian follows from the fact that the both the bottom and large squares are cartesian. We want to show that $g_{\ast}[\qlogo]^{\mathrm{vir}} = [\qrel]^{\vir}$. By Proposition \ref{BNPN} we have that $f_{*}[\cQ_{0,\alpha}^{\log}(\PP^N|H,d)] = [\cQ_{0,\alpha}^{\mathrm{rel}}(\PP^N|H,d)]$. In Proposition \ref{obstructionP} we showed that $[\cQ^{\log}_{0,\alpha}(X|D,\beta)]^{\vir}$ is defined via a perfect obstruction theory for $i$. In actual fact this obstruction theory is pulled back from $j'$. To see this, repeat the argument of Proposition \ref{obstructionP}, starting instead with the distinguished triangle $$\TT_{j} \rightarrow \TT_{[\AAA^M/\GG_m^s]} \rightarrow \mathbf{L}j^*\TT_{[\AAA^{N+1}/\GG_m]} \rightarrow \TT_{j}[1]$$ which shows that the perfect obstruction theory for $j'$ also pulls back to $(\mathbf{R}\pi_{*}\mathbf{L}u^*\TT_{j})^{\vee}$. This tells us that ${j'}^![\cQ_{0,\alpha}^{\log}(\PP^N|H,d)] = [\cQ_{0,\alpha}^{\log}(X|D,\beta)]^{\vir}$ and since diagonal pullback coincides with virtual pullback ~\cite[Lemma A.0.1]{battistella2021relative}, we have that ${j'}^![\cQ_{0,\alpha}^{\mathrm{rel}}(\PP^N|H,d)] = [\qrel]^{\vir}$. Therefore, the theorem follows from ~\cite[Proposition 5.29]{manolache12virtual}.

\end{proof}

\bibliography{bibliographyLOG}
\bibliographystyle{plain}

\footnotesize

\end{document}